\newcommand{\ind}{\mathbbm{1}}
\newtheorem{theorem}{Theorem}[section]
\newtheorem{cor}[theorem]{Corollary}
\newtheorem{lemma}[theorem]{Lemma}
\newtheorem{prop}[theorem]{Proposition}
\newtheorem{proptes}[theorem]{Properties}
\theoremstyle{definition}
\newtheorem{rmk}[theorem]{Remark}
\def \leqp{\leqslant}
\def \geqp{\geqslant}
\newcommand{\abs}[1]{\left\lvert #1 \right\rvert}
\newcommand{\defeq}{\mathrel{\mathop:}=}
\newcommand{\beq} {\begin{eqnarray*}}
\newcommand{\eeq} {\end{eqnarray*}}
\newcommand{\trm} {\textrm}
\newcommand{\tbf} {\textbf}
\newcommand{\noi} {\noindent}
\def \R{\mathbb{R}}
\def \N{\mathbb{N}}
\def \E{\mathbb{E}}
\def \P{\mathbb{P}}
\def \Var{\hbox{{\rm Var}}}
\def \Cov{\hbox{{\rm Cov}}}
\newcommand{\1}{{1\hspace{-0.2ex}\rule{0.12ex}{1.61ex}\hspace{0.5ex}}}
\title{Sensitivity analysis based on Cram\'er von Mises distance} % \thanks is optional. Insert line breaks with \\
\author{Fabrice Gamboa\footnote{Institut de Math\'ematiques de Toulouse, 118 Route de Narbonne 31062 Toulouse Cedex 9. France.
{\tt firstname.lastname@math.univ-toulouse.fr}} \and Thierry~Klein$^{*}$\footnote{ENAC - Ecole Nationale de l'Aviation Civile , Universit\'e de Toulouse, France} \and
Agn\`es~Lagnoux$^{*}$}
\begin{document}

\maketitle
\begin{abstract}
In this paper, we first study a sensitivity index that is based on higher moments and generalizes the so-called Sobol one. Further, following an idea of Borgonovo (see \cite{borgonovo2007}),  we define and study a new sensitivity index based on the Cram\'er von Mises distance. This index appears to be more general than the  Sobol one as it takes into account the whole distribution of the random variable and not only the variance. Furthermore, we study the statistical properties of its Pick and Freeze estimator.   
\end{abstract}
{\bf Keywords: Sensitivity analysis, Cram\'er von Mises distance, Pick and Freeze method, functional delta-method.}

\section{Introduction}
\label{sinto}
A very classical problem in the study of computer code experiments (see  \cite{sant:will:notz:2003}) is the evaluation of the relative influence of the input variables on some numerical result obtained by a computer code. In this context, a sensitivity analysis is performed. Such a topic has been widely studied in the last decades and is still challenging nowadays (see for example \cite{sobol1993,saltelli-sensitivity,rocquigny2008uncertainty} and references therein).
More precisely,
the result of the numerical code $Y$ is seen as a function of the vector of the distributed input $(X_i)_{i=1,\cdots,d}$ ($d\in\N^*$). Statistically speaking, we are dealing here with the following unnoisy non parametric model
\begin{equation*}
Y=f(X_{1},\ldots, X_{d}),
%\label{momodel}
\end{equation*}
where $f$ is a regular unknown numerical function on the state space  $E_1\times E_2\times \ldots \times E_d$ on which the distributed variables $(X_{1},\ldots, X_{d})$ are living. Generally, the random inputs are assumed to be independent and a sensitivity analysis is performed using the so-called Hoeffding decomposition (see \cite{van2000asymptotic,anton84}). In this functional decomposition, $f$ is expanded as an $L^2$-sum of uncorrelated functions involving only a part of the random inputs. This leads, for any subset $v$ of $I_d:=\{1,\ldots,d\}$, to an index called the Sobol index (\cite{sobol1993}) that measures the amount of {\it randomness} (more precisely, the part of the variance) of $Y$   due to the subset of input variables $(X_i)_{i\in v}$. Since nothing has been assumed on the nature of the inputs, one can consider the vector $(X_i)_{i\in v}$ as a single input. Without loss of generality, we then consider the case where $v$ reduces to a singleton.
More precisely, the numerator $H_{v}^2$ of the Sobol index related to  the input $X_{v}$ is

\begin{equation*}
H_{v}^2\defeq \Var\left(\E\left[Y|X_{v}\right]\right)
\end{equation*}

while the denominator of the index is nothing more than the variance of $Y$. Notice that we also have:
\begin{equation}
H_{v}^2=\E\left[\left(\E[Y|X_v]-\E[Y]\right)^{2}\right]=\Var(Y)-\E\left[\left(\E[Y]-\E\left[Y|X_{v}\right]\right)^{2}\right]
\label{trucmoche}
\end{equation} 
In order to estimate $H_{v}^2$, Sobol in \cite{sobol1993} proposed to rewrite the variance of the conditional expectation as a covariance (see equation \eqref{sobol_cov}).
Further,  a well tailored design of experiment called the Pick and Freeze scheme is considered \cite{janon2012asymptotic}. More precisely, let $X^v$ be the random  vector such
that $X^v_v=X_v$ and $X^v_i=X'_i$ if $i\neq v$ where $X'_i$ is an independent copy of $X_i$. Then, setting 
\begin{align}\label{def:Yv}
Y^{v}:=f(X^v),
\end{align}
 an obvious computation leads to the following relationship  (see, e.g., \cite{janon2012asymptotic})
\begin{equation}\label{sobol_cov}
\Var (\mathbb{E}[Y|X_v])=\Cov\left(Y,Y^{v}\right).
\end{equation}
The last equality leads to a natural Monte-Carlo estimator, the so-called Pick and Freeze estimator,
\begin{align*}%\label{esteffgen}
T^{v}_{N, \mathrm{Cl}}&=  \frac{1}{N} \sum_{j=1}^N  Y_j    Y_j^{{v}} - \left(\frac{1}{2N} \sum_{j=1}^N (Y_j+Y_j^{v})\right)^2
\end{align*}
where for $j=1,\cdots, N$, $Y_j$ (resp. $Y_j^{v}$) are independent copies of 
$Y$ (resp. $Y^{v}$).  The sharp statistical properties and some functional extensions of the Pick and Freeze method are considered in \cite{janon2012asymptotic,pickfreeze,radouche}. 
Notice that the Sobol indices and their Monte-Carlo estimation are  order two methods since they derive from the $L^2$-Hoeffding functional decomposition. This is their main drawback. As an illustration consider the following example.
Let $X_{1}$ and $X_{2}$ be two independent standardized  random variables having the same  third and fourth moments with $\E\left[ X_{1}^{5}\right]\neq \E\left[ X_{2}^{5}\right]$. Let us consider the following model
\[
 Y=X_{1}+X_{2}+X_{1}^{2}X_{2}^{2}.
\]
One gets
\[
\Var\left(\E\left[Y|X_{1} \right]\right)=\Var(X_{1}+X_{1}^{2})=\Var(X_{2}+X_{2}^{2})=\Var\left(\E\left[Y|X_{2} \right]\right).
\]
$Y$ is an exchangeable function of the inputs but $X_{1}$ and $X_{2}$ do not share the same distribution. So that,   $X_{1}$ and $X_{2}$ should not have the same importance. That shows the need to introduce a sensitivity index that takes into account all the distribution and not only the second order behavior.  
As pointed out before,  Sobol  indices are based on an $L^{2}$ decomposition. As a matter of fact, they are  well adapted to measure the contribution of an input on the deviation around the mean of $Y$. Nevertheless, it seems  very intuitive that the sensitivity of an   extreme quantile of $Y$ could depend on sets of variables that cannot be captured using only the variances. Thus the same index should not be used for any task and we need to define more general indices.

There are several ways to generalize the Sobol indices. For example, one can define new indices through  contrast functions based on the quantity of interest (see \cite{FKR13}). Unfortunately the Monte-Carlo estimators of these indices are computationally  very expensive. In \cite{DaVeiga13}, Da Veiga presents a way to define moment independent measures through dissimilarity distances. 
These measures define a unified framework that encompasses some known sensitivity indices. They are efficiently estimated in low dimensions but as claimed by the author ``it is well known that density estimation suffers from the curse of dimensionality''.
%Unfortunately, the estimation of such indices relies on the density ratio estimation that can be computationally expensive.
Now, as pointed out in \cite{borgonovo2007,grobobo,petibobo,ODC13,Owen12}, there are situations where higher order methods give a sharper analysis on the relative influence of the input and allow finer screening procedures. Borgonovo {\it et al.} propose and study an index based on the total variation distance (see \cite{borgonovo2007,grobobo,petibobo}); while Owen {\it et al.} suggest to use procedures based on higher moments (see \cite{ODC13,Owen12}).  

Our paper follows these tracks. 
We will first revisit the work of Owen {\it et al.} by studying the asymptotic properties of the multiple Pick and  Freeze estimation.
Further,  we propose a new natural  index based on the Cram\'er von Mises distance between the distribution of the output $Y$ and its conditional law when an input is fixed. 
We will show that this approach leads to natural self-normalized indices.  Indeed, as for Sobol indices, the sum of all first order indices is uniformly bounded. Notice that these indices take into account the whole output distribution instead of only the order two moments and contrary to most of the other known indices, they are naturally defined for multivariate outputs. As a consequence, they are well-tailored to perform a sensitivity analysis for any vectorial output. 
Furthermore, we  show that surprisingly a Pick and Freeze scheme is also available to estimate this new index. This scheme is not really expensive and easy to implement. The sample size required for the estimation is of the same order as the size needed for the classical Sobol index estimation allowing its use in concrete situations. As a consequence, considering a sample with the appropriate size, one can provide simultaneously the Cramér von Mises indices and the Sobol indices. Other advantage of the Cramér von Mises index with respect to the general ones presented in \cite{DaVeiga13} is that the theoretical statistical properties of its estimation can be derived. Indeed, we prove a Central Limit Theorem for the estimator that allows one to exhibit confidence intervals.  

The paper is organized as follows. In the next section, we will study the statistical properties of the multiple Pick and Freeze method proposed earlier by Owen et al (\cite{ODC13,Owen12}). Section \ref{sec:Cramér} is devoted to the new index built on the Cram\'er von Mises distance. In the last section, we give some numerical simulations that illustrate the interest of the new index. Moreover, we revisit a real data example introduced in \cite{BD92} and studied in \cite{FH04,BHP14}.

\section{Higher-moment indices}\label{sec:pickfreeze}

In the sequel, for any integer $k$, the notation $I_k$ stands for the set $\{1,\ldots,k\}$.
%Using the classical Hoeffding decomposition, for a singleton $v\in I_{d}$, the numerator of the classical Sobol index with respect to $v$ is given by
%\begin{align*}%\label{trucmoche2}
%H_{v}^{2}&=\E\left[\left(\E[Y|X_v]-\E[Y]\right)^{2}\right].
%\end{align*}
%
%
Following \cite{ODC13,Owen12}, we  generalize the numerator of the Sobol index defined in \eqref{trucmoche} by considering higher order moments: 
for any integer $q\geqp 2,$ and singleton $v\in I_{d}$,  we set
 \begin{align*}%\label{trucmochep}
H^{q}_{v}:=\E\left[\left(\E[Y|X_v]-\E[Y]\right)^{q}\right].
\end{align*}

\paragraph{Properties}

Obviously, $H^{q}_{v}$ is non negative only  for even $q$ and 
\[
\abs{H^{q}_{v}}\leqp \E\left[\abs{Y-\E[Y]}^q\right].
\]
Further, $H^{q}_{v}$ is invariant by any 
translation of the output.

\paragraph{Estimation procedure}

In view of the estimation of  $H^{q}_{v}$, we first notice that 
\begin{align*}
H^{q}_{v}
%&=\E\left[\left(\E[Y]-\E[Y|X_i, i\in v]\right)^{p}\right]=\E\left[\prod_{i=1}^{p}\left(Y^{v,i}-\E[Y]\right)\right]\\
&  =\E\left[\prod_{i=1}^{q}\left(Y^{v,i}-\E[Y]\right)\right] =\sum_{l=0}^{q }\binom{q}{l}(-1)^{q-l} \E\left[Y \right]^{q-l}\E\left[\prod_{i=1}^{l}Y^{v,i}\right]
%
%%v\subset I_{d},
\end{align*}
with the usual convention $\prod_{i=1}^0 Y^{v,i}=1$ and $ \binom{q}{l}=q!/l!(q-l)!$. Here,  $Y^{v,1}=Y$ and for $i=2,\ldots,q$,  $Y^{v,i}$ is constructed independently as  $Y^{v}$ defined in Equation \eqref{def:Yv}.

Second, we use a Monte-Carlo scheme and consider the following Pick and Freeze design constituted by 
a $N$-sample $\begin{pmatrix} Y^{v,i}_{j}\end{pmatrix}_{(i,j)\in I_{q}\times I_{N}}$ of $\left(Y^{v,1},\ldots, Y^{v,q} \right)$.
The Monte-Carlo estimator is then
\begin{align*}%\label{trucserieux}
H^{v}_{q,N}=\sum_{l=0}^{q}\binom{q}{l}(-1)^{q-l} \left(\overline{P}^{v}_{1}\right)^{q-l} \overline{P}^{v}_{l}
\end{align*}
where for any $N\in \N^{*}$, $j\in I_N$ and $l\in I_{q}$, we have defined
%\begin{align*}
%\overline{Y^{v,i}}&=\frac{1}{N}\sum_{j=1}^{N} Y^{v,i}_{j},\quad 
%Z^{ v}=\frac1{q}\left(\sum_{i=1}^q  \overline{Y^{v,i}}\right)\ \mathrm{and\ } P^{v}_{l}=\frac{1}{N}\sum_{j=1}^{N}\left(\prod_{i=1}^{l} Y^{v,i}_{j}\right).
%\end{align*}
%
\begin{align*}
P^{v}_{l,j}=\binom{q}{l}^{-1}\sum_{k_1<\ldots < k_l\in I_q}\left(\prod_{i=1}^{l} Y^{v,k_i}_{j}\right) \quad \mbox{ and } \quad \overline{P}^{v}_{l}=\frac{1}{N}\sum_{j=1}^{N} P^{v}_{l,j}.
\end{align*}

Notice that we generalize the estimation procedure of \cite{pickfreeze} and use all the available information by considering the means over the set of indices $k_1,\ldots, k_l\in I_d,\; k_n\neq k_m$.

\paragraph{Asymptotic properties of $H^{v}_{q,N}$}

\begin{theorem}\label{th:TCL}
$H^{ v}_{q,N}$ is strongly consistent and asymptotically Gaussian:
\begin{align*}%\label{eq:TCL}
\sqrt{N}\left(H^{v}_{q,N} - H^{v}_{q}\right)
\overset{\mathcal{L}}{\underset{N\to\infty}{\rightarrow}}\mathcal{N}\left(0,\sigma^2 \right)
\end{align*}
where 
\[
\sigma^2=q \left[\Var(Y)+(q-1)\Cov(Y,Y^{v,2})\right]\left(\sum_{l=1}^q a_lb_l\right)^2,
\]
\[
a_l=\frac{l}{q}\E[Y]^{l-1},\qquad l=1,\ldots, q
\]
\[
b_1=(-1)^{q-1}q(q-1) \E[Y]^{q-1}+\sum_{l=2}^{q-1}\binom{q}{l}(-1)^{q-l} (q-l) \E[Y]^{q-l-1} \E\left[\prod_{i=1}^l Y^{v,i}\right]
\]
and
\[
b_l=\binom{q}{l}(-1)^{q-l}\E[Y]^{q-l},\qquad l=1,\ldots, q.
\]
\end{theorem}

\paragraph{Interpretation and comments}

The collection of all indices $\left(H^{q}_{v}\right)_q$ is much more informative than the classical Sobol index with respect to $v$. Nevertheless, it has several drawbacks. First, these indices are moment-based and it is well known that they are not stable when the moment order increases. Second, they may be negative when $q$ is odd. To overcome this fact, 
one may introduce $\E\left[\abs{\E[Y|X_v]-\E[Y]}^{q}\right]$ but the Pick and Freeze estimation procedure is then lost.
Third, the Pick and Freeze estimation procedure is computationally expensive and may be unstable: it requires a $q\times N$-sample of the output $Y$. In order to have a good idea of the influence of an input on the law of the 
output, we need to estimate the first $K-1$ indices $H^{q}_{v}$: $H^{2}_{v}$, \ldots, $H^{K}_{v}$. Hence, we need to run the code $K\times N$ times. 

In a nutshell, these indices are not attractive in a practical point of view. In the next section, we then introduce a new sensitivity index that is based on the conditional distribution of the output and requires only $3\times N$ runs. Concretely, it compares the output distribution to the conditional one whereas the $q$ higher-order moment indices only compare the $q$-th output moment to the conditional one.

\section{The Cram\'er von Mises index}
\label{sec:Cramér}

In this section, the code will be denoted by $Z=f(X_{1},\ldots, X_{d})\in\R^k$. It is worth noticing that here we can consider multivariate outputs unlike in Section \ref{sec:pickfreeze} and \cite{BHP14}, e.g., Let $F$ be the distribution function of $Z$:
\[
F(t)=\P\left(Z\leqp t\right)=\E\left[\1_{\{Z\leqp t\}}\right],\; \textrm{for $t=(t_1,\ldots, t_k)\in\R^k$}
\]
and $F^{v}$ be the conditional distribution function of $Z$ conditionally on $X_{v}$:
\[
F^{v}(t)=\P\left(Z\leqp t|X_{v}\right)=\E\left[\1_{\{Z\leqp t\}}|X_{v}\right],\; \textrm{for $t=(t_1,\ldots, t_k)\in\R^k.$}
\]
Notice that $\{Z\leqp t\}$ means that $\{Z_1\leqp t_1, \ldots, Z_k\leqp t_k\}$.
Obviously,  $\E\left[F^{v}(t)\right]=F(t)$.  Now, we define  $Y(t)=\1_{\{Z\leqp t\}}$ and take $p=2$. Since for any fixed $t\in \R^{k}$, $Y(t)$ is a real-valued random variable, we apply the framework presented in Section \ref{sec:pickfreeze}. More precisely, for any $v\in I_p$ let $\sim v$ be $I_p\setminus \{v\}$ and  we first perform the Hoeffding decomposition of $Y(t)$:
\begin{align}\label{eq:decomp_Y}
Y(t)&=\1_{\{Z\leqp t\}}=\E[Y(t)]+\left(\E[Y(t)|X_v]-\E[Y(t)]\right)+\left(\E[Y(t)|X_{\sim v}]-\E[Y(t)]\right)+R(t,v)
\end{align}
where 
\[
R(t,v)=Y(t)-\E[Y(t)]-\left(\E[Y(t)|X_v]-\E[Y(t)]\right)-\left(\E[Y(t)|X_{\sim v}]-\E[Y(t)]\right).
\]
As done usually, we compute the variance of both sides of \eqref{eq:decomp_Y} that leads to
\begin{align}
\Var(Y(t))&=F(t)(1-F(t))\nonumber\\
&=\Var\left(\E[Y(t)|X_v]-\E[Y(t)]\right)+\Var\left(\E[Y(t)|X_{\sim v}]-\E[Y(t)]\right)+\Var(R(t,v))\nonumber\\
&= \Var\left(F^{v}(t)\right)+ \Var\left(F^{\sim v}(t)\right)+\Var(R(t,v))\nonumber\\
&= \E\left[\left(F^{v}(t)-F(t)\right)^{2}\right]+\E\left[\left(F^{\sim v}(t)-F(t)\right)^{2}\right]+\Var(R(t,v)) \label{eq:decomp_var}
\end{align}
by the decorrelation of the different terms involved in the Hoeffding decomposition.

\begin{rmk}
A straightforward application of the results of Section \ref{sec:pickfreeze} provides for any fixed $t\in \R^{k}$ a consistent and asymptotically normal procedure for the estimation of 
\[
\E\left[\left(F^{v}(t)-F(t)\right)^{2}\right]=\Var\left(F^{v}(t)\right) \quad \text{and} \quad \E\left[\left(F^{\sim v}(t)-F(t)\right)^{2}\right]=\Var\left(F^{\sim v}(t)\right).
\]
\end{rmk}

Now we integrate the terms in \eqref{eq:decomp_var} in $t\in \R^k$ with respect to the distribution of $Z$:
\begin{align}
&\int_{\R^k} F(t)(1-F(t))dF(t)\nonumber\\
= &\int_{\R^k} \E\left[\left(F^{v}(t)-F(t)\right)^{2}\right]dF(t)+\int_{\R^k} \E\left[\left(F^{\sim v}(t)-F(t)\right)^{2}\right]dF(t)+\int_{\R^k} \Var(R(t,v)) dF(t) \label{eq:decomp_int}
\end{align}
This integration has to be understood in the Riemmann-Stieltjes sense (see, e.g., \cite{horst86}). Notice that the first term in the right hand side of \eqref{eq:decomp_int} represents a Cram\'er Von Mises type distance of order $2$ between the distribution $\mathcal{L}\left(Z\right)$  of $Z$ and the distribution $\mathcal{L}\left(Z|X_{v}\right)$  of $Z$ given $X_{v}$.\\

Following the classical way of defining Sobol indices, we normalize the previous equation by 
\[
\int_{\R^k} F(t)(1-F(t))dF(t)
\]
leading to
\begin{align}
1
&= \frac{\int_{\R^k} \E\left[\left(F^{v}(t)-F(t)\right)^{2}\right]dF(t)}{\int_{\R^k} F(t)(1-F(t))dF(t)}+\frac{\int_{\R^k} \E\left[\left(F^{\sim v}(t)-F(t)\right)^{2}\right]dF(t)}{\int_{\R^k} F(t)(1-F(t))dF(t)}+\frac{\int_{\R^k} \Var(R(t,v)) dF(t)}{\int_{\R^k} F(t)(1-F(t))dF(t)} \label{eq:decomp_norm}
\end{align}

Then we define the Cram\'er Von Mises indices with respect to $v$ and $\sim v$ by
\begin{align*}%\label{CVM}
S_{2,CVM}^{v}:=\frac{\int_{\R^{k}}\E\left[\left(F(t)-F^{v}(t)\right)^{2}\right]dF(t)}{\int_{\R^k} F(t)(1-F(t))dF(t)} \quad \text{and} \quad 
S_{2,CVM}^{\sim v}:= \frac{\int_{\R^{k}}\E\left[\left(F(t)-F^{\sim v}(t)\right)^{2}\right]dF(t)}{\int_{\R^k} F(t)(1-F(t))dF(t)}.
\end{align*}

\begin{proptes}\label{prop:prop} These new indices are naturally adapted to multivariate outputs and they share the same  properties as the classical Sobol index. Namely,
\begin{enumerate}
\item as seen in \eqref{eq:decomp_norm}, the different contributions sum to 1. 
\item they are invariant by translation, by any isometry and by any non degenerated scaling of the components of $Y$. 
\end{enumerate}
\end{proptes}

\begin{rmk}
\begin{enumerate}
\item We could have defined the following indices instead
\begin{align*}%\label{CVM}
\int_{\R^{k}}\frac{\E\left[\left(F(t)-F^{v}(t)\right)^{2}\right]}{F(t)(1-F(t))} dF(t) \quad \text{and} \quad 
\int_{\R^{k}}\frac{\E\left[\left(F(t)-F^{\sim v}(t)\right)^{2}\right]}{F(t)(1-F(t))}dF(t).
\end{align*}
normalizing by $F(t)(1-F(t))$ (like in the Anderson-Darling statistic) before the integration phase. Nevertheless, the previous integrals might be not defined. Moreover, even if the integrals are well defined, one may encounter numerical explosion during the estimation procedure that might be produced for small and large values of $t$ since the normalizing factor then cancels. 
\item In this paper, we only consider first-order sensitivity indices as well for the classical Sobol indices and for the Cramér von Mises indices.
Anyway, as well as for the Sobol indices, one may define 
higher-order and total Cramér von Mises indices. The construction of the former is straightforward taking $v$ no longer a singleton. For example, if one is interested in the second-order Cramer von Mises index with respect to the first and second inputs, it suffices to take $v=\{1,2\}$. Concerning the latter, the total Cramér von Mises index $S_{2,CVM}^{Tot,v}$ with repect to $v$ is defined by
\[
S_{2,CVM}^{Tot,v}:=1-S_{2,CVM}^{\sim v}=1-\frac{\int_{\R^{k}}\E\left[\left(F(t)-F^{\sim v}(t)\right)^{2}\right]dF(t)}{\int_{\R^k} F(t)(1-F(t))dF(t)} .
\]
\item To use the Hoeffding decomposition, the inputs are required to be independent. 
Anyway, one can compute the Cramér von Mises index when the inputs are dependent. Nevertheless, there are then difficult to interpret.
\end{enumerate}
\end{rmk}

\subsection{General comments on the Cramér von Mises indices}

\paragraph{Cramér von Mises indices versus Sobol indices}

\noi\\
Cramér von Mises and Sobol indices are both based on the Hoeffding decomposition and sum to 1. Nevertheless, the former 
are based on the whole distribution of the output, in contrast with the latter that are only based on the order-two moments.
Notice that two variables that have a different influence on the output may have the same Sobol indices (just as two random variables with different 
distribution can have the same variance). This point represents one limitation of Sobol indices and does not occur with the Cramér von Mises indices as one can see in Section \ref{ssec:toy}.   \\

In addition, remark that a null value for a Sobol index does not imply that the input is unimportant whereas a null value for a Cramér von Mises index means that the input is unimportant. Moreover, by definition, a large Cramér von Mises index means that the input variable under concern has a great influence on the output in regions 
where the output has a large distribution mass. That is why we advice the practitioner to use them in a general context. Nevertheless, when one is interested in the mean output behavior, the Sobol indices are more adapted. Indeed, as noted in \cite{FKR13}, the Sobol indices minimize the contrast associated to the mean. In the same spirit, if one is interested in specific feature of the output (for example an $\alpha$-quantile), one should use the index based on the associated contrast. See \cite{FKR13} for more details on the notion of contrast and the results therein. \\

In contrast, 
the indices based on the whole distribution partially get rid of such limitations and pathological patterns. However, one can build an example based, e.g., on two input variables that leads to the same indices $S_{2,CVM}^1$  and $S_{2,CVM}^2$ once the integration with respect to $t$ has been done. \\

%Roughly speaking, quantifying the variability of an output $Z$ through its variance, the classical Sobol index, usually denoted $S^v$, represents the part (proportion) of the total variance that is explained by the random input $X_v$ (without the interaction with the other random inputs) for a given $v\in I_d$. 
%Summing the index $S^v$ and the higher-order indices (see \cite{homma1996importance}), one can build a picture of the importance of each variable in determining the output variance.
%Now, similarly, if we quantify the variability of $Z$ through the Cramér von Mises distance, then 
%$S_{2,CVM}^{v}$ represents the part of the variability of the output $Z$ due to $X_v$. \\

\paragraph{Cramér von Mises indices versus moment independent indices}

\noi\\
There already exists several moment-independent indices: some of them have been introduced by Borgonovo et {\it al.}\ (density-based indices \cite{grobobo}, cumulative distribution function based indices \cite{BI16}). See also \cite{BB13} for other indices and references therein. More recently, Da Veiga \cite{DaVeiga13} shows that those indices are special cases of a class of sensitivity indices based on the Csiz\'ar $f$-divergence. A lot of classical ``distances'' between probability measures as, e.g., the Kullback-Leibler divergence, the Hellinger distance and the total variation distance belong to this family of divergences. Other dissimilarity measures exist to compare probability distributions: in particular, integral probability metrics \cite{Muller97}.\\

In comparison with the indices defined in Equation (17) in \cite{BI15}, we can notice that the integration is done with respect to the distribution of the output in the former indices while the integration is done with respect to the Lebesgue measure in the latter indices. Our method represents at least two advantages: (i) the index always exist whatever the output distribution (ii) such an integration weights the support of the output distribution.\\

Since the space of the probability measures on $\R^k$ is of infinite dimension, the different distances on this space are not equivalent; hence they are very difficult to compare. Each index is constructed on a specific distance and has its own interest.     
Despite the fact that the Cramér von Mises indices have no clear dual formulation,  they present the following remarkable advantages. As we will see in the next sections, one can easily estimate them with a low simulation cost that does not depend on the dimension of the output. The sample required for their estimation also provide Sobol indicies estimation. In addition, these estimators are asymptotically normal and converge at the rate $\sqrt N$ which allows the practitioner to build confidence intervals.

\noi\\
The rest of the section is dedicated to the estimation of $S_{2,CVM}^{v}$ (and $S_{2,CVM}^{\sim v}$). One has to estimate both the numerator and the denominator of the indices. Nevertheless, when the output $Z$ has independent coordinates that are absolutely continuous with respect to the Lebesgue measure, we have
\[
\int_{\R^k} F(t)(1-F(t))dF(t)=\E[F(Z)(1-F(Z)]=\frac{1}{2^k}-\frac{1}{3^k}.
\]
Thus the normalizing factor reduces to $\frac{1}{2^k}-\frac{1}{3^k}$. 
As a consequence, we propose two versions of Central Limit Theorems: the first one deals with the numerator's estimator and can be applied when  the output $Z$ has independent coordinates that are absolutely continuous with respect to the Lebesgue measure whereas the second one concerns the general estimator and may apply in any other cases.

\subsection{Numerator estimation and its asymptotic properties}\label{ssec:num_est}

We denote  the numerator of $S_{2,CVM}^{v}$ by $N_{2,CVM}^{v}$. Notice that it can be rewritten as
\begin{align*}%\label{CVM_esp}
N_{2,CVM}^{v}=\E_{\tilde Z}\left[\E_{X_v}\left[\left(F(\tilde{Z})-F^{v}(\tilde{Z})\right)^{2}\right]\right]
\end{align*}
where $\tilde{Z}$ is an independent copy of $Z$. 

Then we proceed to a double Monte-Carlo scheme for the estimation of $N_{2,CVM}^{v}$ and consider the following design of experiment consisting in:
\begin{enumerate}
\item The classical Pick and Freeze sample, that is two $N$-samples of $Z$: $(Z_j^{v,1},Z_j^{v,2})$, $1\leqp j\leqp N$;
\item A third $N$-sample of $Z$ independent of $(Z_j^{v,1},Z_j^{v,2})_{1\leqp j\leqp N}$: $W_k$, $1\leqp k\leqp N$.
\end{enumerate}

The empirical estimator of $N_{2,CVM}^{v}$ is then given by
\begin{align}\label{def:CvM_est}
\widehat N_{2,CVM}^{v}=\frac 1N \sum_{k=1}^N \left\{\frac 1N \sum_{j=1}^N  \1_{\{Z_j^{v,1}\leqp W_k\}} \1_{\{Z_j^{v,2}\leqp W_k\}}-\left[\frac {1}{2N} \sum_{j=1}^N  \left(\1_{\{Z_j^{v,1}\leqp W_k\}}+ \1_{\{Z_j^{v,2}\leqp W_k\}}\right)\right]^2\right\}.
\end{align}

Now we established the consistency of $\widehat N_{2,CVM}^{v}$ that follows directly from an auxiliary lemma (see Section \ref{sec:proofs}).

\begin{cor}\label{cor:cons_N2_est}
$\widehat N_{2,CVM}^{v}$ is strongly consistent as $N$ goes to infinity.
\end{cor}

Now we turn to the asymptotic normality of $\widehat N_{2,CVM}^{v}$. We follow van der Vaart \cite{van2000asymptotic} to establish the following proposition (more precisely Theorems 20.8 and 20.9, Lemma 20.10 and Example 20.11).

\begin{theorem}\label{th:as_norm_N2_est}
The sequence of estimators $\widehat N_{2,CVM}^{v}$ is asymptotically Gaussian in estimating $N_{2,CVM}^{v}$. That is, 
$\sqrt{N}\left(\widehat N_{2,CVM}^{v}- N_{2,CVM}^{v}\right)$ converges in distribution towards the centered Gaussian law with a limiting variance $\xi^2$ whose explicit expression can be found in the proof.
\end{theorem}

\begin{rmk}
Thanks to Theorem \ref{th:as_norm_N2_est}, we are now able to provide asymptotic confidence intervals for the estimation of  $N_{2,CVM}^{v}$. They are of the form
$(\widehat N_{2,CVM}^{v}\pm  z_{\alpha}\xi/\sqrt{N}),$ where $z_{\alpha}$ is the $1-\alpha/2$ quantile of a standard normal distribution.
Unfortunately, the variance $\xi^2$ is unknown but thanks to its explicit form it is easy to replace it by a consistent estimator  $\widehat{\xi}$ and use Slutsky's Lemma to have an asymptotic confidence interval.
\end{rmk}

\subsection{Estimation of the general index and its asymptotic properties}

In order to estimate the general index $S_{2,CVM}^{v}$, we first estimate its numerator as in Subsection \ref{ssec:num_est} and then its denominator
that we denote $D_{2,CVM}^{v}$. Notice that it can be rewritten as
\begin{align*}%\label{CVM_esp_den}
D_{2,CVM}^{v}=\E\left[F(Z)(1-F(Z))\right]
\end{align*}
and estimated using the design of experiment already introduced for the estimation of the numerator by
\begin{align}\label{def:CvM_est_den}
\widehat D_{2,CVM}^{v}=\frac 1N \sum_{k=1}^N \left\{\frac{1}{2N} \sum_{j=1}^N  \left(\1_{\{Z_j^{v,1}\leqp W_k\}}+\1_{\{Z_j^{v,2}\leqp W_k\}}\right)-\left[\frac {1}{2N} \sum_{j=1}^N  \left(\1_{\{Z_j^{v,1}\leqp W_k\}}+ \1_{\{Z_j^{v,2}\leqp W_k\}}\right)\right]^2\right\}.
\end{align}

Proceeding as in Subsection \ref{ssec:num_est}, we have

\begin{cor}\label{cor:cons_S2_est}
$\widehat S_{2,CVM}^{v}$ is strongly consistent as $N$ goes to infinity.
\end{cor}

The following Central Limit Theorem comes from the functional Delta method.

\begin{theorem}\label{th:as_norm_S2_est}
The sequence of estimators $\widehat S_{2,CVM}^{v}$ is asymptotically Gaussian in estimating $S_{2,CVM}^{v}$. That is, 
$\sqrt{N}\left(\widehat S_{2,CVM}^{v}- S_{2,CVM}^{v}\right)$ converges in distribution towards the centered Gaussian law with a limiting variance that can be computed.
\end{theorem}

\subsection{Practical advices}

\noi\\
In a general setting, for all the nice properties of the Cramér von Mises indices and their efficient estimation easy to implement, we recommend to use the Cramér von Mises indices. As a consequence, considering a sample with the appropriate size, one can estimate once at a time the Cramér von Mises indices and the Sobol indices. More precisely, if one wants to estimate $p$ Sobol indices a sample size of $(p+1) N$ is required. With only $N$ more output evaluations, we get both the $p$ Sobol indices  and the Cramér von Mises ones. Furthermore, the theoretical theorems  provides confidence intervals that controlled the accuracy of the estimations.
Anyway, when the practitioner is interested in a specific feature (e.g., mean behavior or quantile) of the output, he should use more suited indices (e.g., the classical Sobol indices for the mean or the indices introduced in \cite{FKR13} for the quantile).

\section{Numerical applications}\label{snum}

\subsection{A flavor of the method applied on a toy model}\label{ssec:toy}

Let us  consider the quite simple linear model
\[
Y=\alpha X_1+ X_2,\;\; \alpha>0, 
\]
where $X_1$ has a Bernoulli distribution with success probability $0<p<1$ and $X_1$, $X_2$ are independent. Assume further that $X_2$ has a continuous distribution $F_2$ on $\R$ such that $\E[X_2]=\alpha p$ and 
with finite variance $\Var(X_2)=\alpha^2p(1-p)$. With these choices, the random variables  $\alpha X_1$ and $X_2$ share the same expectation and the same variance. Thus $X_1$ and $X_2$ have the same first order Sobol indices all equal to $1/2$. \\
We present a general closed formula to compute our new indices and show that in some particular cases an exact formula is available. Then we perform a simulation study in order to illustrate the Central Limit Theorem and analyse the practical behaviour of our estimators. 

\subsubsection{General closed formula}

On one hand, the distribution of $Y$ given $X_1=0$ and the distribution of $Y$ given $X_1=1$ are given by
\[
\begin{cases} 
\mathcal L(Y|X_1=0)=\mathcal L(X_2)\\
\mathcal L(Y|X_1=1)=\mathcal L(X_2+\alpha).\\
\end{cases}
\]
On the other hand, the conditional distribution of $Y$ given $X_2$ is 
$$\P\left(Y=\alpha+X_2\left|\right .X_2\right)=1-\P\left(Y=X_2|X_2\right)=p.$$
Hence, the distribution function of $Y$ 
is the mixture $pF_2(\cdot-\alpha)+(1-p)F_2(\cdot)$. 
Tedious computations lead to
\begin{align*}
S_{2,CVM}^{1}= 6 p(1-p)\int_\R(F_2(t)-F_2(t-\alpha))^2\left[(1-p)dF_2(t)+pdF_2(t-\alpha)\right]
%\label{H1_ex1}
\end{align*}
and
\begin{align*}
S_{2,CVM}^2=1-6p(1-p)\left[\frac{1}{2}-\int_\R F_2(t-\alpha)dF_2(t)\right] 
%\label{H2_ex1}
\end{align*}
(the normalizing factor being 1/6 as explained before). 

As $p$ goes to $0$ (and $\alpha$ goes to infinity), $(S_{2,CVM}^{1},S_{2,CVM}^{2})$ goes to $(0,1)$ while the two classical Sobol indices remain equal to $1/2$.
Our new indices shed lights on the fact  that, for small $p$,  $X_2$ has much more influence on $Y$ than $X_1$ which follows the intuition. This fact is  not detected by the classical Sobol indices. \\

Similarly we can compute the indices of order $q$ ($q\geqp 2$):
\begin{align*}
H_1^q=\alpha^q \left[p(1-p)^q+(-p)^q(1-p)\right] \quad \textrm{and} \quad H_2^q=\E[(X_2-\mu)^q].
\end{align*}

\tbf{Particular cases} (i) if $X_2$ is a centered Gaussian random variable with variance $\Var(X_2)=\alpha^2p(1-p)$, one can easily derive an explicit formula for $H_2^q$:
\begin{align*}
H_2^q&=\E[(X_2-m)^q]=\frac{q!}{2^{q/2}\cdot (q/2)!}\ind_{q\in 2\N^*}.
\end{align*}

(ii) if $X_2$ is uniformly distributed on $[0,b]$ with $b=2 \alpha \sqrt{3p(1-p)}$, one can easily derive an explicit formula for the indices introduced before:
\begin{align*}
S_{2,CVM}^{1}&=6p(1-p)\times\left(
\left(\frac{\alpha}{b}\right)^2\left(1-\frac 23 \frac{\alpha}{b}\right) \ind_{\alpha\leqp b}
+ \frac 13\ind_{\alpha> b}\right)\\
S_{2,CVM}^2&=1 -3p(1-p)\left(1-\left(\frac{b-\alpha}{b}\right)^2 \ind_{\alpha\leqp b}\right).
\end{align*}
Moreover, $H_2^q=\E[(X_2-\mu)^q]=
(b/2)^q/(q+1)\ind_{q\in 2\N^*}$.\\

(iii) if $X_2$ is exponentially distributed with mean $1/\lambda=\alpha \sqrt{p(1-p)}$, one can easily derive an explicit formula for the indices introduced before:
\begin{align*}
S_{2,CVM}^{1}=2p(1-p)(1-e^{-\lambda \alpha})^2 \quad \trm{and} \quad S_{2,CVM}^2=1 -3p(1-p)(1-e^{-\lambda \alpha}).
\end{align*}
Moreover,
$H_2^q=\E[(X_2-\mu)^q]=q!\lambda^{-q}/2$.

\subsubsection{Simulation study}
A numerical illustration with sample sizes $N$=100 and 500 is presented in Figures \ref{fig:simu_borgo_unif_1} and \ref{fig:simu_borgo_unif_2} (remind that in order to estimate both indices we compute $4N$ values of the output function). We consider the case where the random variable $X_2$ is uniformly distributed (for the other cases the simulations provide the same kind of results). We estimate the Cram\'er von Mises indices thanks to Equation \eqref{def:CvM_est} and renormalize it by the factor $1/6$ since the output has a continuous distribution. Then we estimate the limiting variance in \eqref{eq:var_xi} in order to provide asymptotic confidence intervals. In Figures  \ref{fig:simu_borgo_unif_1} and \ref{fig:simu_borgo_unif_2},  the blue line represents the true value of index $D^1_{2,CVM}$ (first row) or $D^2_{2,CVM}$ (second row). The red dashed line (resp. the red dashed line with +) represents the index estimation based on \eqref{def:CvM_est} (resp. the confidence interval). 
In the left column, $\alpha$ is fixed to 1/2 and $p$ varies while in the right one, $p$ is fixed to 1/4 and $\alpha$ varies.

\begin{figure}[p]
\centering
\includegraphics[width=17cm, height=17cm]{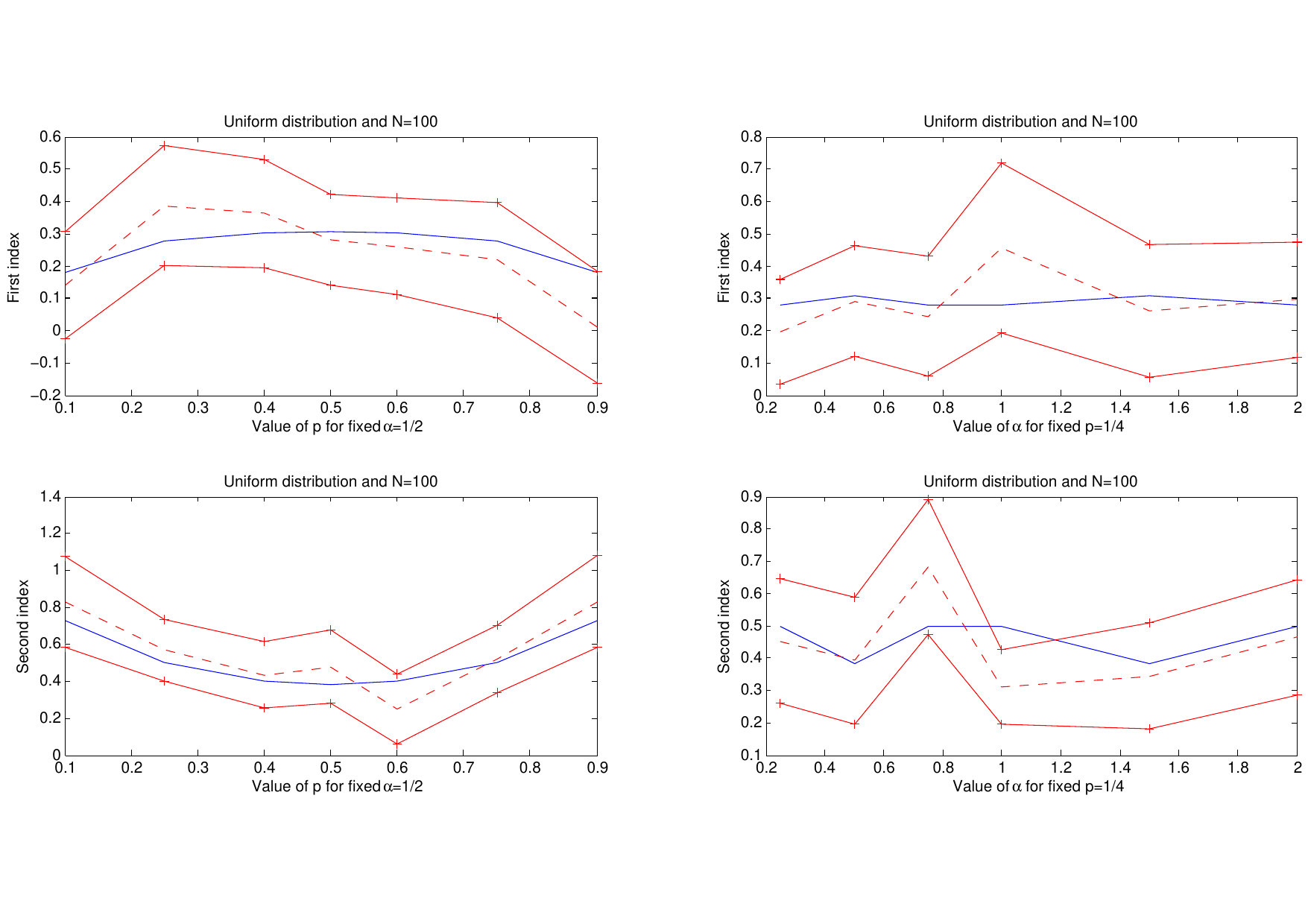}
\caption{Example 1 - $X_2$ uniformly distributed and $N$=100.}\label{fig:simu_borgo_unif_1}
\end{figure}

\begin{figure}[p]
\centering
\includegraphics[width=17cm, height=17cm]{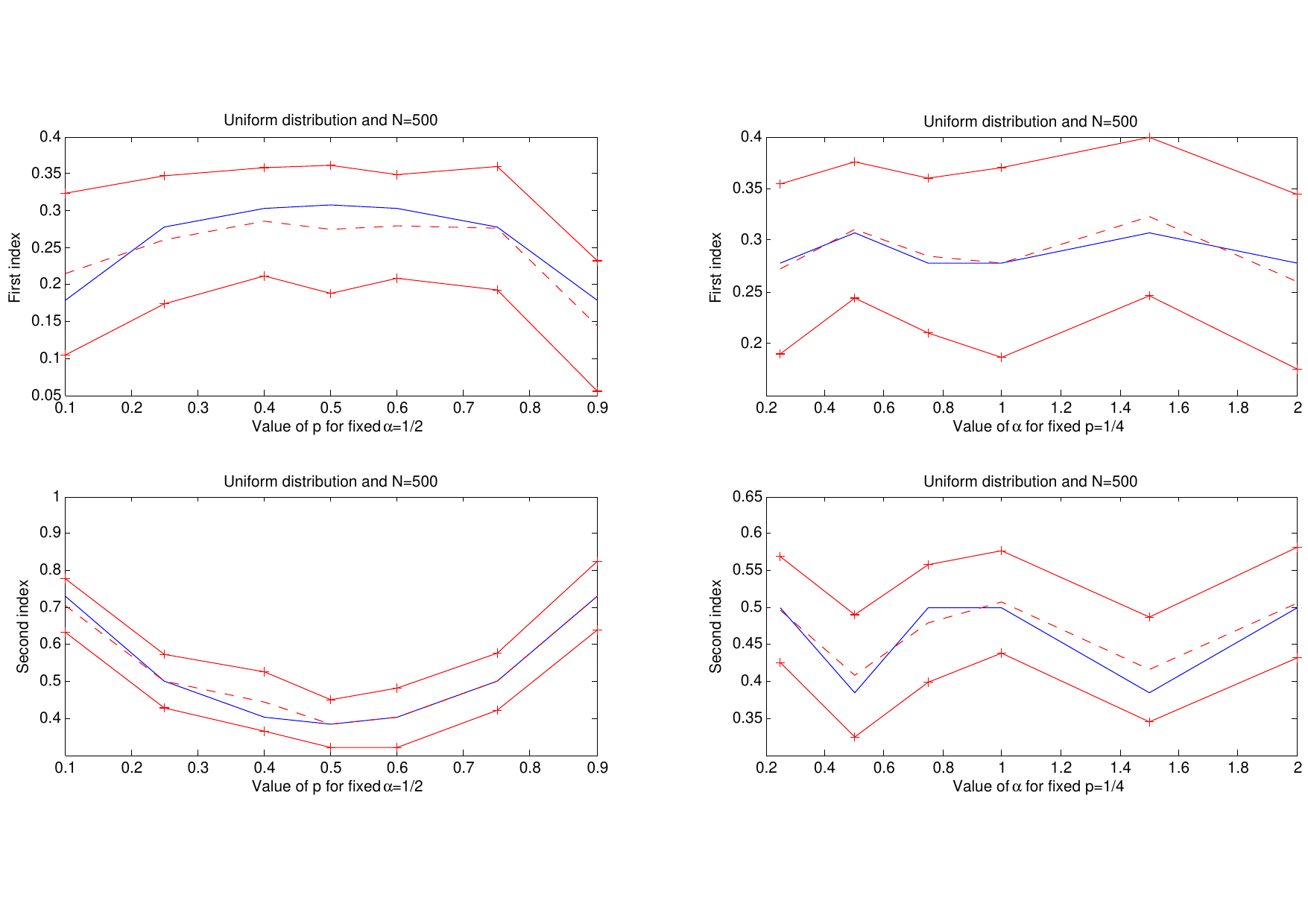}
\caption{Example 1 - $X_2$ uniformly distributed and $N$=500.}\label{fig:simu_borgo_unif_2}
\end{figure}

\subsection{A non linear model}

Now, let us consider the following non linear  model
\begin{equation}\label{eq:exp1}
Y=\exp\{ X_1+ 2X_2\}, 
\end{equation}
where $X_1$ and $X_2$ are independent standard Gaussian random variables. The distribution of $Y$ is log-normal and we can derive both its density and its distribution functions:
\[
f_Y(y)=\frac{1}{\sqrt{10\pi}y}e^{-(\ln y)^2/10}\1_{\R^+}(y)\quad \trm{and} \quad F_Y(y)=\Phi\left(\frac{\ln y}{\sqrt 5}\right) 
\]
where $\Phi$ stands for the distribution function of the standard Gaussian random variable. Its density function will be denoted by $f$ in the sequel. Then tedious computations lead to the Cramér von Mises indices $S_{2,CVM}^{1}$ and $S_{2,CVM}^{2}$.

\begin{prop}\label{prop:ex} Assume that $Y$ is defined by Equation \eqref{eq:exp1} then 
\begin{align*}
S_{2,CVM}^{1}=\frac{6}{\pi}\arctan 2-2\approx 0.1145
%\label{H1_ex2}
\end{align*}
and
\begin{align*}
S_{2,CVM}^2=\frac{6}{\pi}\arctan \sqrt{19}-2\approx 0.5693. 
%\label{H2_ex2}
\end{align*}
\end{prop}

\begin{rmk}
In this simple example, one can compute the indices of order $q$ ($q\geqp 2$):
\begin{align*}
H_1^q&=\E\left[(e^{X_1+2}-e^{5/2})^q\right]\quad \trm{and} \quad 
H_2^q=\E\left[(e^{2X_1+1/2}-e^{5/2})^q\right].
\end{align*}
\end{rmk}

The Sobol indices and their estimation based on the Pick-Freeze scheme with a sample of size $N$ are computed using equation (6) in \cite{janon2012asymptotic}. We also compute the Cramér von Mises indices and their estimation based on \eqref{def:CvM_est}. Moreover, we estimate the limiting variances in both cases (see equation \eqref{eq:var_xi} for the Cramér von Mises indices and equation (12) in \cite{janon2012asymptotic} for the Sobol indices) in order to provide confidence intervals. The results are presented in Table \ref{tab:ex2}. 

%\begin{table}[h]
%\begin{center}
%\begin{tabular}{cccccc}
	%\cline{3-6}
	%\multicolumn{2}{c}{} & \multicolumn{2}{c}{Cramér von Mises} & \multicolumn{2}{c}{Sobol indices}\\
	%\cline{3-6}
	%\multicolumn{2}{c}{} & $D^1_{2,CVM}$  & $D^2_{2,CVM}$ & $S^1$ & $S^2$\\
	%\cline{2-6}
		%\multicolumn{1}{c}{} & True values & 0.1145 & 0.5693 & 0.0118 & 0.3738\\
	%\hline
	    %$N=10^2$ & Est. values & 0.1943  & 0.5006 & 0.1962 & 0.1553\\
			         %&  CI  $5\%$     & [,]     & [,] & [,] & [,]\\
			%$N=10^3$ & Est. values & 0.1152  & 0.5576 & 0.0952 & 0.1085\\
			         %&  CI  $5\%$     & [,]     & [,] & [,] & [,]\\
			%$N=10^4$ & Est. values & 0.1055  & 0.5857 & 0.0092 & 0.3031\\
			         %&  CI  $5\%$     & [,]     & [,] & [,] & [,]\\
			%%$N=10^5$ & 0.0188  &  0.0947 & 0.0210 & 0.2201\\
			%\hline
%\end{tabular}
%\end{center}
%\caption{\label{tab:ex2} The Cramér von Mises indices and their estimation based on \eqref{def:CvM_est} in model \eqref{eq:exp1}}
%\end{table}

\begin{table}[h]
\begin{center}
\begin{tabular}{cccccc}
	\cline{3-6}
	\multicolumn{2}{c}{} & \multicolumn{2}{c}{Cramér von Mises} & \multicolumn{2}{c}{Sobol indices}\\
	\cline{3-6}
	\multicolumn{2}{c}{} & $D^1_{2,CVM}$  & $D^2_{2,CVM}$ & $S^1$ & $S^2$\\
	\cline{2-6}
		\multicolumn{1}{c}{} & True values & 0.1145 & 0.5693 & 0.0118 & 0.3738\\
	\hline
	    $N=10^2$ & Est. values & 0.1287 &   0.6097 & 0.0425 &  0.1954\\
			         &  CI  $5\%$     & [-0.0601,0.3175] & [0.4692,0.7503] & [0.0265,0.0585] &   [0.0430,0.3477]\\
			$N=10^3$ & Est. values & 0.1358 &   0.6007 & 0.1198 &   0.2345\\
			         &  CI  $5\%$     & [0.07861,0.19297] & [0.54897,0.65242] & [-0.5633,0.8030] & [0.1343,0.3347]\\
			$N=10^4$ & Est. values & 0.1166  & 0.5585 & 0.01685 &  0.26252\\
			         &  CI  $5\%$     & [0.09930,0.13382] &   [0.54150,0.57540] & [0.0010,0.0327] &   [-1.2744, 1.7994]\\
			\hline
\end{tabular}
\end{center}
\caption{\label{tab:ex2} Model \eqref{eq:exp1}. The Cramér von Mises and Sobol indices, their estimations based on \eqref{def:CvM_est} and (6) in \cite{janon2012asymptotic} and the associated $5\%$-confidence intervals.}
\end{table}

As a conclusion, with only $N=10^3$, the statistical method provides a precise estimation of the different indices. Moreover, in this example, the Sobol and Cramér von Mises indices give the same influence ranking of the two random inputs. Nevertheless, the estimation of the Cramér von Mises indices seems to be more efficient to give the true ranking.

\subsection{Application: The Giant Cell Arthritis Problem}

\paragraph{Context and goal}

\noi\\
In this subsection, we consider the realistic problem of management of suspected giant cell arthritis posed by Bunchbinder and Detsky in \cite{BD92}. More recently, this problem was also studied by Felli and Hazen \cite{FH04} and Borgonovo {\it et al.} \cite{BHP14}. As explained in \cite{BD92}, \enquote{giant cell arthritis (GCA) is a vasculitis of unknown etiology that affects large and medium sized vessels and occurs almost exclusively in patients 50 years or older}. This disease may lead to severe side effects (loss of visual accuity, fever, headache,...) whereas the risks of not treating it include the threat of blindness and major vessels occlusion. A patient with suspected GCA can receive a therapy based on Prednisone. Unfortunately, a treatment with high Prednisone doses may cause severe complications. Thus when confronted to a patient with suspected GCA, the clinician must adopt a strategy. There is a considerable literature on sensitivity analysis for these sorts
of models, based on the utility of learning a model input before choosing
a treatment strategy (see, e.g., \cite{FH98} and \cite{oakley09}).
In \cite{BD92}, the authors considered four different strategies:
\begin{itemize}
\item[A]: Treat none of the patients;
\item[B]: Proceed to the biopsy and treat all the positive patients;
\item[C]: Proceed to the biopsy and treat all the patients whatever their result;
\item[D]: Treat all the patients.
\end{itemize}
The clinician wants to adopt the strategy optimizing the patient outcomes measured in terms of utility. The reader is referred to \cite{NM53} for more details on the concept of utility. The basic idea is that a patient with perfect health is assigned a utility of 1 and the expected utility of the other patients (not perfectly healthy) is calculated subtracting some \enquote{disutilities} from this perfect score of 1. 
These strategies are represented in Figures \ref{fig:A} to \ref{fig:D} with the different inputs involved in the computation of the utilities.

%\begin{figure}[h]
%\centering
%\includegraphics[scale=0.3]{tree_A.pdf}
%\label{fig:A}
%\end{figure}
%\begin{figure}[h]
%\centering
%\includegraphics[scale=0.3]{tree_B.pdf}
%\label{fig:B}
%\end{figure}
%\begin{figure}[h]
%\centering
%\includegraphics[scale=0.3]{tree_C.pdf}
%\label{fig:C}
%\end{figure}
%\begin{figure}[h]
%\centering
%\includegraphics[scale=0.3]{tree_D.pdf}
%\label{fig:D}
%\end{figure}

\begin{figure}[h!]
\centering 
\includegraphics[width=10cm]{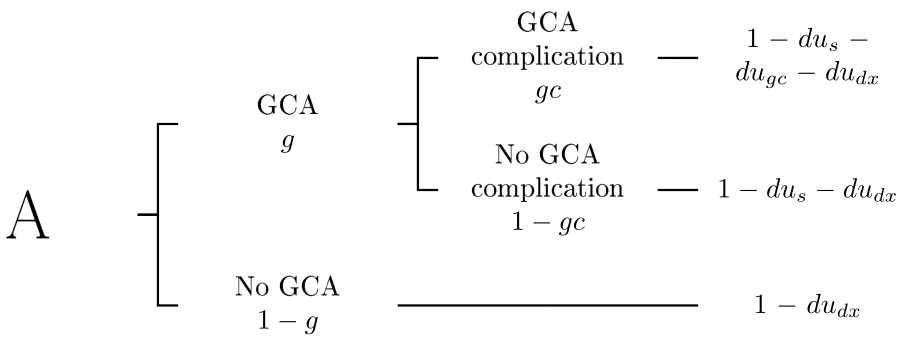}
\caption{The decision tree for the treat none alternative}
\label{fig:A}
\end{figure}

\begin{figure}[h!]
\centering 
\hspace{-2cm}
\includegraphics[width=12cm]{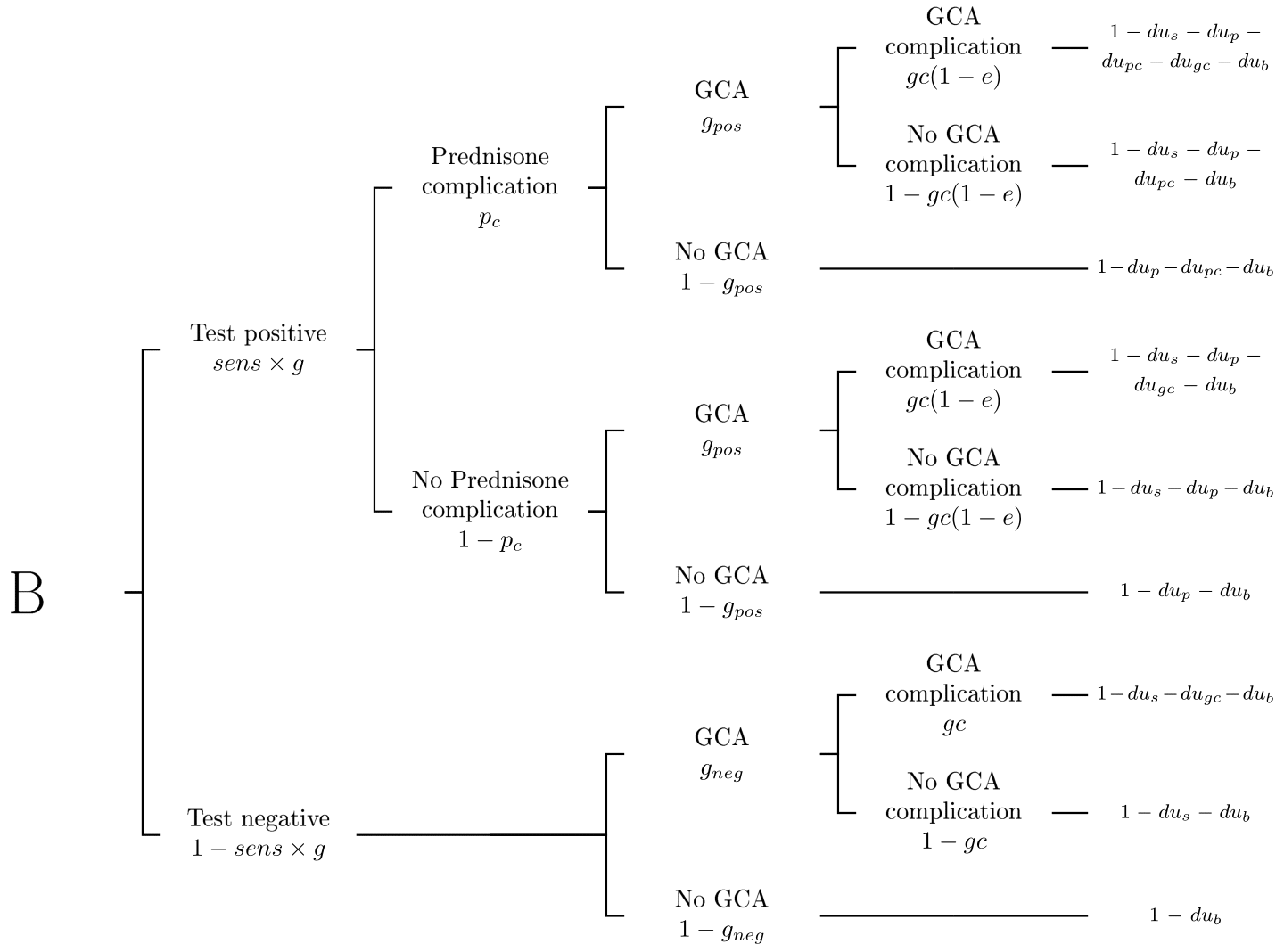}
\caption{The decision tree for the biopsy and the treat positive alternative}
\label{fig:B}
\end{figure}

\begin{figure}[h!]
\centering 
\hspace{-3cm}
\includegraphics[width=12cm]{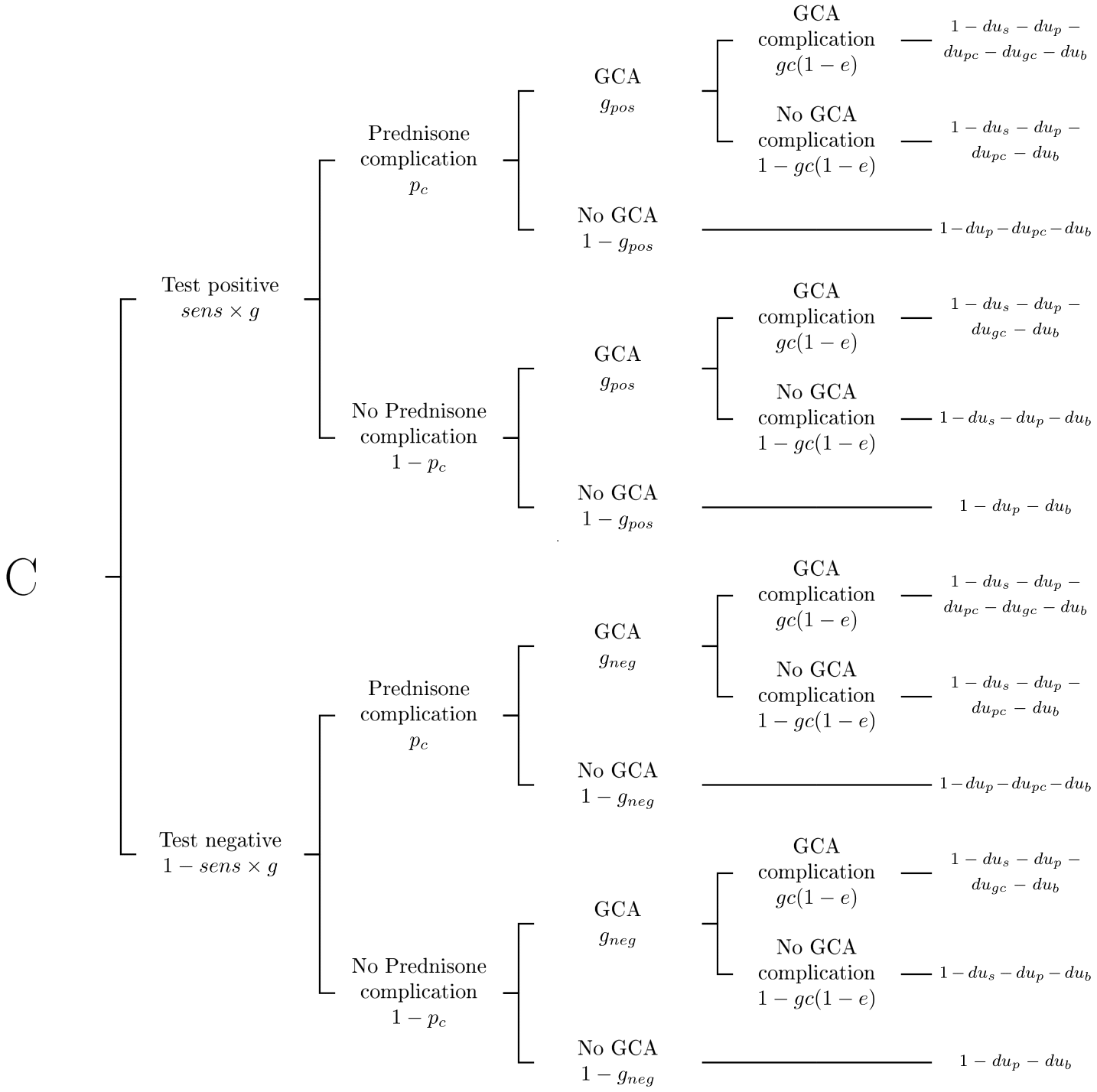}
\caption{The decision tree for the biopsy and the treat all alternative}
\label{fig:C}
\end{figure}

\begin{figure}[h!]
\centering 
\hspace{-1.5cm}
\includegraphics[width=10cm]{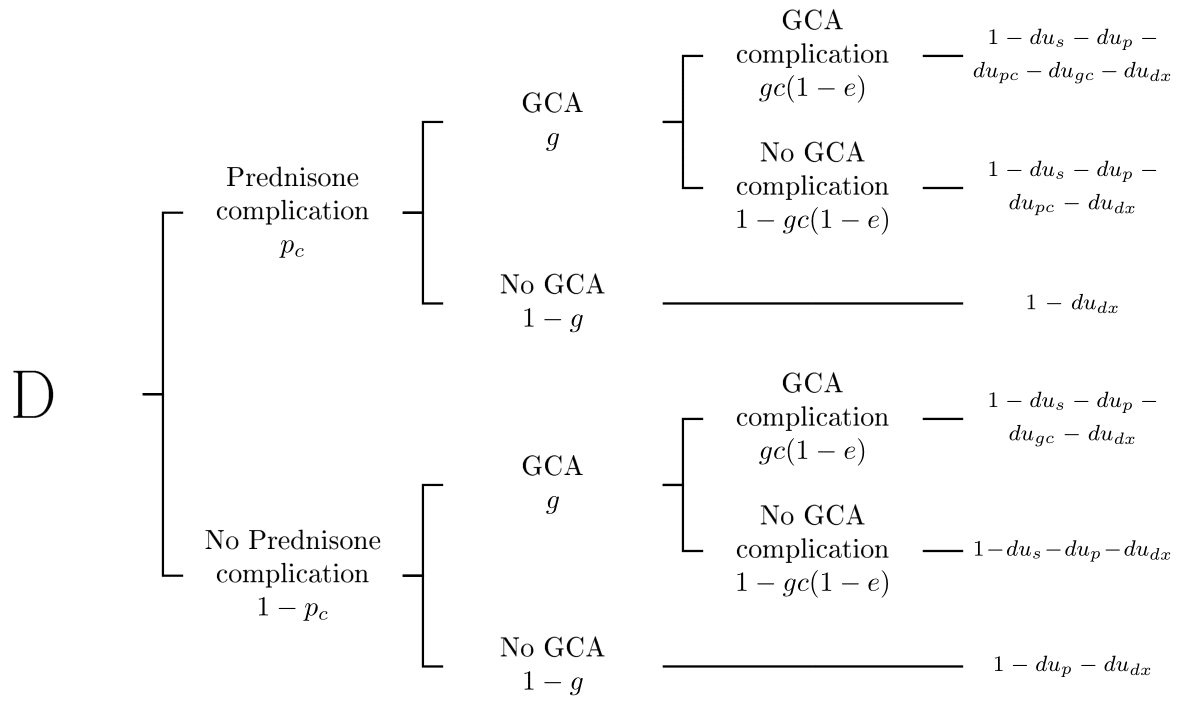}
\caption{The decision tree for the treat all alternative}
\label{fig:D}
\end{figure}

For example in strategy A (see Figure \ref{fig:A}), the utility of a patient having GCA and developing severe GCA complications is given by $1-d_s-du_{gc}-du_{dx}$. His entire sub-path is then
\[
g\times gc\times (1-d_s-du_{gc}-du_{dx}).
\]

\paragraph{The input parameters and the modelisation of the random ones}

\noi\\
As seen in Figures \ref{fig:A} to \ref{fig:D}, the different strategies involve input parameters like, e.g., the proportion $g$ of patients having GCA or the probability $gc$ for a patient to develop severe GCA complications (fixed at 0.8 as done in \cite{BD92}) or even the disutility associated to having GCA symptoms. Table \ref{tab:input} summarizes the input parameters involved. 

\begin{table}[h]
\begin{center}
\begin{tabular}{lllccll}
\hline
Fixed parameters & Symbols & Fixed value &  & &&\\
\hline
$\P$[having GCA] & $g$ & 0.8 & -- & -- & ~~~-- & ~~~-- \\
D(having symptoms of GCA) & $du_s$ & 0.12 & -- & -- & ~~~-- & ~~~-- \\
D(having a temporal artery biopsy) & $du_b$ & 0.005 & -- & -- & ~~~-- & ~~~-- \\
D(not knowing the true diagnosis) & $du_{dx}$ & 0.025 & -- & -- & ~~~-- & ~~~-- \\
\hline
\multicolumn{5}{l}{} & \multicolumn{2}{c}{Beta($\alpha$,$\beta$)}\\
\cline{6-7}
Uncertain parameters & Symbols & Base & Min. $m$ & Max. $M$ & ~~~$\alpha$ & ~~~$\beta$\\
\hline
$\P$[developing severe complications of GCA] & $gc$ & 0.3 & 0.05 & 0.5 & 4.179 & 11.011\\
$\P$[developing severe iatrogenic side effects] & $pc$ & 0.2 & 0.05 & 0.5 & 2.647 & 10.589\\
Efficacy of high dose Prednisone & $e$ & 0.9 & 0.8 & 1 & 27.787 & 3.087\\
Sensitivity of temporal artery biopsy & $sens$ & 0.83 & 0.6 & 1 & 7.554 & 1.547\\
D(major complication from GCA) & $du_{gc}$ & 0.8 & 0.3 & 0.9 & 27.454 & 6.864\\
D(Prednisone therapy) & $du_p$ & 0.08 & 0.03 & 0.2 & 4.555 & 52.380\\
D(major iatrogenic side effect) & $du_{pc}$ & 0.3 & 0.2 & 0.9 & 15.291 & 35.680\\
\hline
\end{tabular}
\end{center}
\caption{\label{tab:input} The data used by Buchbinder and Detsky \cite{BD92} in their analysis}
\end{table}

The values $\P[\cdot{} ]$ and $D(\cdot{} )$ refer respectively to the probability of an event and to the disutility associated with an event. The minimum and maximum values $m$ and $M$ depict each parameter's range for the sensitivity analysis. The base values are provided by a clinician expertise. 
The utilities of the different strategies when all the input parameters are set to their base value are summarized in Table \ref{tab:utilities}.

%\begin{center}
%\begin{tabular}{lc}
%\multicolumn{2}{l}{\tbf{Table 2 - The utilities of the different strategies }}\\
%\multicolumn{2}{l}{\tbf{when all the input parameters are set to }}\\
%\multicolumn{2}{l}{\tbf{their base value}}\\
%\hline
%Treatment alternative & Utilility\\
%\hline
%A Treat none & 0.6870\\
%B Biopsy and treat positive & 0.7575\\
%C Biopsy and treat all & 0.7398\\
%D Treat all & 0.7198\\
%\hline
%\end{tabular}
%\end{center}

\begin{table}[h]
\begin{center}
\begin{tabular}{lcc}
\hline
Treatment alternative & Utilility & Expectation\\
\hline
A Treat none & 0.6870 & 0.6991\\
B Biopsy and treat positive & 0.7575 & 0.7570\\
C Biopsy and treat all & 0.7398 & 0.7371\\
D Treat all & 0.7198 & 0.7171\\
\hline
\end{tabular}
\end{center}
\caption{\label{tab:utilities} The utilities of the different strategies when all the input parameters are set to their base value (second column) and their expectation when they are random (third column).}
\end{table}

The base value of some input parameters are reliable while the others are really uncertain that leads us to consider them as random. As a consequence, if $Y_A$, $Y_B$, $Y_C$ and $Y_D$ represent the outcomes corresponding to the four different strategies $A$ to $D$, the clinician aims to determine 
\begin{align}\
\max\{\E[Y_A],\E[Y_B],\E[Y_C],\E[Y_D]\}
\end{align}
with the uncertain model input presented in Table \ref{tab:input}. A sensitivity analysis is then performed to determine the most influential input variables on the outcome.\\

\noi
As done in \cite{FH04,BHP14}, all the random inputs will be independently based on Beta distributions. The Beta density parameters corresponding to each random input are determined by fitting the base value as their mean and capturing 95$\%$ of the probability mass in the range defined by the minimum and maximum. The remaining 5$\%$ will be equally distributed to either side of this range if possible. Concretely, each random input will be distributed as
\[
Z\1_{m\leqp Z< M}+U\1_{m> Z}+V\1_{Z\geqp M}
\]
where $Z$, $U$ and $V$ are independent random variables. $Z$ is Beta distributed with parameters $(\alpha,\beta)$. $U$ and $V$ are uniform random variables on $[0,m]$ and $[M,1]$ respectively. 

\paragraph{Sensitivity analysis}

\noi\\
As already mentioned, the clinician wants to determine the highest utility. In \cite{BB13}, the authors then consider the highest utility as output and lead a sensitivity analysis to determine the input having the largest influence on this output. Since we are able to treat multivariate outputs, we consider a more general framework in this paper: the output is the four-dimensional random variable $Y=(Y_A, Y_B, Y_C, Y_D)$ where $Y_S$ represents the outcome corresponding to strategy $S$. \\

We compare three different methodologies and indices. First, we consider the Sobol indices introduced in \cite{GJKL14} (Multivariate). Second, we consider the indices constructed in this paper, based on the Cramér von Mises distance and estimated by the ratios of the numerator estimator \eqref{def:CvM_est} and the denominator estimator \eqref{def:CvM_est_den}. Third, we consider the index presented in \cite{BB13} and named $\beta$ defined by
\[
\beta_i=\E[\sup_{y\in\mathcal{Y}} \{|F_Y(y)-F_{Y|X_i}(y)|\}].
\]
Then we use the estimator given in \cite[Table 1]{BHP14} adapted to the multivariate case that is based on the tedious and costly estimation of conditional expectations.

\paragraph{Results}

\noi\\
Table \ref{tab:results_borgo} summarizes the sensitivity measures of the seven random inputs on the multivariate output with the three different methodologies while Table \ref{tab:results_borgo_rank} presents the associated ranks. It is worth mentioning that the same total sample size has been used to compare properly the three methodologies. 

%\begin{table}[p]
%\begin{center}
%\begin{tabular}{c|ccccccc|c}
%\hline
%Sens. meas. & $gc$ & $pc$ & $e$ & $sens$ & $du_{gc}$ & $du_{p}$ & $du_{pc}$ & Rank\\
%\hline
%Multivariate &  &  &  &  &  &  &  & 1236475\\
%Borgonovo {\it et al.}  &  &  &  &  &  &  &  & 1627354\\
%Cramér von Mises &  &  &  &  &  &  &  & 1627354\\
%\hline
%\end{tabular}
%\end{center}
%\caption{\label{tab:results_borgo} Sensitivity measures}
%\end{table}

\begin{table}[h]
\begin{center}
\begin{tabular}{|c|c|c|c|c|c|c|c|c||c|}
\cline{2-10}
\multicolumn{1}{c|}{} & Sensitivity meas. & 1 & 2 & 3 & 4 & 5 &  6 & 7 & Cputime \\
\hline
& Multivariate &  0.3690  &  0.0193  &  0.0105  & -0.0821  & -0.0617   & 0.1150  & -0.0751 & 0.0624\\
$N=10^2$ & Borgonovo {\it et al.} & 0.1195  &  0.1047  &  0.1064  &  0.1022   & 0.1046  &  0.1063  &  0.1027  & 1.5132\\
& Cramér von Mises &  0.3496  &  0.0745  &  0.0206  & -0.0010 &   0.0084 &   0.1042 &   0.0105 & 0.9048\\
\hline
\hline
& Multivariate &  0.4024 &   0.1201 &   0.0516 &  -0.0190  & -0.0043  &  0.2403  &  0.0093 &0.0156\\
$N=10^3$ & Borgonovo {\it et al.} & 0.1788 &   0.1192  &  0.1009  &  0.1007  &  0.1044  &  0.1195  &  0.1028 & 57.8452\\
& Cramér von Mises & 0.3494 &   0.0750  &  0.0209  & -0.0008  &  0.0086  &  0.1045  &  0.0109 & 10.1089\\
\hline
\hline
& Multivariate &  0.3828 &   0.1333 &   0.0618 &  -0.0016  &  0.0100 &   0.3182   & 0.0217  & 0.0312  \\
$N=10^4$ & Borgonovo {\it et al.} & 0.3842  &  0.1572 &   0.1033 &   0.0930 &   0.0986   & 0.1775  &  0.1061 &  5.1988 $10^3$\\
& Cramér von Mises &  0.3494    &0.0775  &  0.0232 &   0.0011 &   0.0108  &  0.1056 &   0.0124 & 436.8028   \\
\hline
\end{tabular}
\end{center}
\caption{\label{tab:results_borgo} Sensitivity measures. The estimation of the Cramér von Mises indices is the ratio of \eqref{def:CvM_est} and \eqref{def:CvM_est_den}.}
\end{table}

\begin{table}[h]
\begin{center}
\begin{tabular}{ccc}
\cline{2-3}
\multicolumn{1}{c}{} & Sensitivity meas. & Ranking\\
\hline
 & Multivariate &   1     6     2     3     5     7     4\\
$N=10^2$ & Borgonovo {\it et al.} & 1     3     6     2     5     7     4\\
& Cramér von Mises & 1     6     2     3     7     5     4\\
\hline
 & Multivariate &   1     6     2     3     7     5     4\\
$N=10^3$ & Borgonovo {\it et al.} & 1     6     2     5     7     3     4\\
& Cramér von Mises & 1     6     2     3     7     5     4\\
\hline
 & Multivariate & 1     6     2     3     7     5     4\\
$N=10^4$ & Borgonovo {\it et al.} & 1     6     2     7     3     5     4\\
& Cramér von Mises & 1     6     2     3     7     5     4\\
\hline
\end{tabular}
\end{center}
\caption{\label{tab:results_borgo_rank} Ranks. The estimation of the Cramér von Mises indices is the ratio of \eqref{def:CvM_est} and \eqref{def:CvM_est_den}.}
\end{table}

As a conclusion, in this example, unlike the indices defined by Borgonovo  {\it et al.}, the multivariate sensitivity indices and the Cramér von Mises indices provide the same ranking. The main advantage of the Cramér von Mises 
sensitivity methodology with respect to the one of Borgonovo  {\it et al.} is that one can use the Pick and Freeze estimation scheme which provides an accurate estimation (see \eqref{def:CvM_est}) simple to implement.

Notice that in \cite{BHP14}, the authors study a slightly different model that explains the numerical differences between their results and the ones of the present paper. 
Furthermore, they perform a sensitivity analysis on the best alternative with the greater mean instead of considering the multivariate output.

\section{Conclusion}

In this paper, we first study the asymptotic properties of the multiple Pick and Freeze scheme proposed by Owen {\it et al.} for the estimation of higher order Sobol indices. 
This index has several drawbacks that lead us to propose a new natural index based on the Cram\'er von Mises distance between the distribution of the output $Y$ and the conditional law when an input is fixed. 
This new index contains all the distributional information, is naturally defined for multivariate outputs and provides a rigorous sharper way for a fast screening of complex computer codes. Furthermore, our approach is generic and may be 
extended and implemented for general outputs (vectorial, valued on a manifold, functional, ...).   
Concerning its estimation, we show that surprisingly a Pick and Freeze scheme is also available for the estimation procedure and prove that it is efficient in a theoretical point of view as well as in a practical one. More precisely, we establish a Central Limit Theorem that confirms the good statistical properties of our estimator and allows us to build confidence intervals.
Furthermore, the estimation is well working with moderate sample sizes as shown in toy examples. Finally, the performance of the method is proven on a real data example. 

%For example, it gives good results on 
%the real data example studied in the last section of the paper.

%\paragraph{Practical advices} 
%
%\noi\\
%In a general setting, for all the nice properties of the Cramér von Mises indices and their efficient estimation easy to implement, we recommend to use the Cramér von Mises indices. Nevertheless, when the practitioner is interested in a specific feature (e.g., mean behavior or quantile) of the output, he should use more suited indices (e.g., the classical Sobol indices for the mean or the indices introduced in \cite{FKR13} for the quantile).

\section*{Acknowledgement} The authors are greatly indebted to the referees for their
fruitful and detailed suggestions or comments which permit us to greatly improve our paper.\\
Part of this research was conducted within the frame of the Chair in Applied Mathematics OQUAIDO and   the ANR project PEPITO (ANR-14-CE23-0011).\\

\section{Proofs}\label{sec:proofs}

\subsection{Proof of Theorem \ref{th:TCL}}

\begin{proof}[Proof of Theorem \ref{th:TCL}]
The consistency follows from a straightforward application of the strong law of large numbers. The asymptotic normality is derived by two successive applications of the delta method  \cite{van2000asymptotic} . \\

(1) Let $W_j^1\defeq (Y^{v,1}_{j},\ldots, Y^{v,p}_{j})^T$ ($j=1,\ldots, N$) and $g^1$ be the mapping from $\R^{p}$ to $\R^{p}$ whose $l$-th coordinate is given by 
\[
g^1_l(x_1,\ldots, x_p)=\binom{p}{l}^{-1}\sum_{\begin{matrix}k_1<\ldots < k_l \\ k_{i}\in I_p,i=1,\ldots,l\end{matrix}}\left(\prod_{i=1}^{l} x_{k_i}\right).
\]
Then $(W_j^1)_{j=1,\ldots, N}$ is an i.i.d. sample distributed as $W^1\defeq (Y^{v,1},\ldots, Y^{v,p})^T$.

Let $\Sigma^1$ be the covariance matrix of $W_j^1$. Clearly, one has $\Sigma_{ii}^1=\Var(Y)$ for $i\in I_p$ while for $i\neq j,$ $\Sigma_{ij}^1=\Cov(Y^{v,i},Y^{v,j})=\Cov(Y,Y^{v,2})$. The multidimensional Central Limit Theorem gives that

\begin{equation*}
\sqrt{N}\left(\frac1N \sum_{j=1}^N W_j^1-m\right)\overset{\mathcal{L}}{\underset{N\to\infty}{\rightarrow}} \mathcal{N}_{p}\left(0,\Sigma^1\right),
\end{equation*}
where $m:=(\E[Y],\ldots, \E[Y])^T$. We then apply the so-called delta method to $W^1$ and $g^1$ so that
\begin{equation*}
\sqrt{N}\left(g^1\left(\overline{W}^1_N\right)-g^1\left(\E\left[W^1\right]\right)\right)\overset{\mathcal{L}}{\underset{N\to\infty}{\rightarrow}} \mathcal{N}\left(0,J_{g^1}\left(\E\left[W^1\right]\right)\Sigma^1 J_{g^1}\left(\E\left[W^1\right]\right)^T\right)
\end{equation*}
where $J_{g^1}\left(\E\left[W^1\right]\right)$ is the Jacobian of $g^1$ at point $\E\left[W^1\right]$. Notice that for $i\in I_p$ and $k\in I_p$,
\[
\frac{\partial g_l^1}{\partial x_k} \left(\E\left[W^1\right]\right)=\frac{\binom{p-1}{l-1}}{\binom{p}{l}} m^{l-1}=\frac{l}{p}\E[Y]^{l-1}=:a_l.
\]

Thus $\Sigma^2:=J_{g^1}\left(\E\left[W^1\right]\right)\Sigma^1 J_{g^1}\left(\E\left[W^1\right]\right)^T$ is given by

\[
\Sigma^2_{ij}=pa_ia_j \left(\Sigma_{11}^1+(p-1)\Sigma_{12}^1\right).
\]

(2) Now consider $W_j^2\defeq (P^{v,1}_{j},\ldots P^{v,p}_{j})^T$ ($j=1,\ldots, N$) and $g^2$ the mapping from $\R^{p}$ to $\R$ defined by 
\[
g^2(y_1,\ldots, y_p)=\sum_{l=0}^{p}\binom{p}{l}(-1)^{p-l} y_1^{p-l} y_{l}.
\]
Then $(W_j^2)_{j=1,\ldots, N}$ is an i.i.d. sample distributed as $W^2\defeq (P^{v,1},\ldots P^{v,p})^T$.

We apply once again the delta method to $W^2$ so that
\begin{equation*}
\sqrt{N}\left(g^2\left(\overline{W}^2_N\right)-g^2\left(\E\left[W^2\right]\right)\right)\overset{\mathcal{L}}{\underset{N\to\infty}{\rightarrow}} \mathcal{N}\left(0,J_{g^2}\left(\E\left[W^2\right]\right)\Sigma^2 J_{g^2}\left(\E\left[W^2\right]\right)^T\right)
\end{equation*}
where $J_{g^2}\left(\E\left[W^2\right]\right)$ is the Jacobian of $g^2$ at point $\E\left[W^2\right]$. Notice that for $k\in I_p$,
\begin{align*}
\frac{\partial g^2}{\partial y_1} \left(\E\left[W^2\right]\right)&= (-1)^{p-1}p(p-1) \E[Y]^{p-1} \\
&+\sum_{l=2}^{p-1}\binom{p}{l}(-1)^{p-l} (p-l) \E[Y]^{p-l-1} \E\left[\prod_{i=1}^l Y^{v,i}\right]
\end{align*}
and
\begin{align*}
\frac{\partial g^2}{\partial y_l} \left(\E\left[W^2\right]\right)&= \binom{p}{l}(-1)^{p-l}\E[Y]^{p-l}.
\end{align*}

Thus the limiting variance is 
\[
\sigma^2:=J_{g^2}\left(\E\left[W^2\right]\right)\Sigma^2 J_{g^2}\left(\E\left[W^2\right]\right)^T=p \left(\Sigma_{11}^1+(p-1)\Sigma_{12}^1\right)\left(\sum_{i=1}^p a_ib_i\right)^2,
\]
where $b_i$ is the $i$-th coordinate of $\nabla g^2\left(\E\left[W^2\right]\right)$. 
\end{proof}

\subsection{An auxiliary result and the proofs of the results of Section \ref{sec:Cramér}}

%\begin{proof}[Proof of Proposition \ref{prop:prop}]
%Notice that for any $t$,
%\[
%\E_{X_v}\left[\left(F(t)-F^{v}(t)\right)^{2}\right]=\Var(F^{v}(t))\leqp \frac 14.
%\]
%Indeed, $0\leqp F^{v}(t)=\E\left[\1_{\{Z\leqp t\}}|X_{v}\right] \leqp 1$ and the variance of any random variable valued in $[a,b]$ is less or equal than $\left(\frac{b-a}{2}\right)^2$.
%Integrating on $t$,  the first point of the last proposition gives an upper bound for $S_{2,CVM}^{v}$. The second point follows.
%\end{proof}

\begin{lemma}\label{lem:cv}
Let $G$ and $H$ be two  
measurable functions.
Let $(U_j)_{j\in I_N}$ and $(V_k)_{k\in I_N}$ be two independent samples of i.i.d. random variables. Assume that $G(U_1,V_1)$ and $H(U_1,U_2,V_1)$ are both integrable and centered. 
We define $S_N$ and $T_N$ by 
\[
S_N=\frac{1}{N^2} \sum_{j,k=1}^N G(U_j,V_k) \quad \trm{and} \quad T_N=\frac{1}{N^3} \sum_{i,j,k=1}^N H(U_i,U_j,V_k).
\]
Then $S_N$ and $T_N$ converge a.s. to 0 as $N$ goes to infinity.
\end{lemma}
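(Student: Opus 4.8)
The plan is to recognize $S_N$ and $T_N$ as two-sample V-statistics and to deduce their almost sure convergence from the strong law of large numbers for U-statistics (see \cite{van2000asymptotic}), handling separately the diagonal terms that prevent $T_N$ from being a genuine U-statistic.

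First I would treat $S_N$. Since $(U_j)_j$ and $(V_k)_k$ come from two independent iid samples, every summand $G(U_j,V_k)$ involves one index from each sample, so $S_N=\frac1{N^2}\sum_{j,k}G(U_j,V_k)$ is exactly a two-sample U-statistic of degree $(1,1)$. Its mean is $\E[G(U_1,V_1)]=0$ for every pair $(j,k)$, and $G(U_1,V_1)\in L^1$ by assumption. The strong law for U-statistics then yields $S_N\to\E[G(U_1,V_1)]=0$ almost surely; no symmetrization is needed here because the two arguments are drawn from different samples.

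Next I would treat $T_N$ by splitting the sum over $(i,j)$ into its off-diagonal and diagonal parts:
\[
T_N=\frac1{N^3}\sum_{\substack{i,j,k=1\\ i\neq j}}^N H(U_i,U_j,V_k)+\frac1{N^3}\sum_{i,k=1}^N H(U_i,U_i,V_k).
\]
For the off-diagonal part, I would symmetrize in the first two arguments by setting $\widetilde H(u_1,u_2,v)=\tfrac12\bigl(H(u_1,u_2,v)+H(u_2,u_1,v)\bigr)$; replacing $H$ by $\widetilde H$ leaves the sum over $i\neq j$ unchanged, and $\E[\widetilde H(U_1,U_2,V_1)]=\E[H(U_1,U_2,V_1)]=0$ since $U_1,U_2$ are iid. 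Factoring out the combinatorial constant, the off-diagonal part equals $\frac{N-1}{N}$ times the two-sample U-statistic with symmetric kernel $\widetilde H$ of degree $(2,1)$, which converges almost surely to $0$; as $\frac{N-1}{N}\to1$, this contribution vanishes almost surely. For the diagonal part, I would write it as $\frac1N\bigl(\frac1{N^2}\sum_{i,k}H(U_i,U_i,V_k)\bigr)$, where the inner average is a two-sample U-statistic of degree $(1,1)$ with kernel $(u,v)\mapsto H(u,u,v)$; it converges almost surely to the finite constant $\E[H(U_1,U_1,V_1)]$, so the whole diagonal is $O(1/N)\to0$ almost surely. Combining the two parts gives $T_N\to0$ almost surely.

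The only genuine subtlety lies in the diagonal part of $T_N$: it is not part of any U-statistic, and controlling it requires both the combinatorial observation that it carries a prefactor of order $1/N$ and the integrability of $H(U_1,U_1,V_1)$, which is not listed among the stated hypotheses but holds automatically in the intended application where $H$ is a bounded product of indicator functions. Everything else reduces to a direct invocation of the strong law for U-statistics once the kernels are symmetrized.
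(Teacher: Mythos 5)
Your proof is correct, but it takes a genuinely different route from the paper's. The paper proceeds by a fourth-moment bound: it shows $\E[S_N^4]=O(N^{-2})$ and $\E[T_N^4]=O(N^{-2})$ by expanding the fourth power and counting index configurations --- any configuration that could contribute $O(N^{-1})$ or $O(1)$ forces an isolated factor $\E[G(U_i,V_j)]$ (resp.\ $\E[H(U_i,U_j,V_k)]$), which vanishes by the centering hypothesis --- and then concludes via Borel--Cantelli. You instead decompose $S_N$ and $T_N$ into two-sample U-statistics (plus the diagonal part of $T_N$) and invoke the strong law of large numbers for U-statistics. Each approach buys something: the paper's argument is elementary and self-contained, but it implicitly requires fourth moments of $G$ and $H$ (needed even to write $\E[S_N^4]$), which is strictly stronger than the stated $L^1$ hypothesis; your argument stays at the level of first moments, which matches the hypotheses more closely, but it imports a nontrivial external result. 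Two cautions on that import: first, van der Vaart's book, which you cite, develops the CLT theory for U-statistics but not this strong law --- the reverse-martingale argument of Hoeffding/Berk (or Sen for the multi-sample case) is the appropriate reference; second, the $L^1$ strong law for two-sample U-statistics is safe here precisely because both sample sizes equal $N$ (convergence along the diagonal, where the reverse-martingale argument applies), whereas for unrestricted sample sizes one generally needs slightly more than $L^1$. Finally, the diagonal-integrability caveat you raise about $\E\left[\abs{H(U_1,U_1,V_1)}\right]$ is genuine --- it does not follow from the stated hypotheses --- but it afflicts the paper's own proof a fortiori, since fourth moments of diagonal terms appear there as well; in the intended application both kernels are bounded products of indicators, so both proofs apply.
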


\begin{proof}[Proof of Lemma \ref{lem:cv}]
Notice that if $\E[S_N^4]=O\left(\frac{1}{N^2}\right)$ then by Borel-Cantelli lemma, $S_N$ converges a.s. to 0. Now,

\begin{align*}
\E[S_N^4]&=\frac{1}{N^8} \sum \E[G(U_{i_1},V_{j_1})G(U_{i_2},V_{j_2})G(U_{i_3},V_{j_3})G(U_{i_4},V_{j_4})]
\end{align*}

where the sum is taken over all the indices $i_1$, $i_2$, $i_3$, $i_4$, $j_1$, $j_2$, $j_3$, $j_4$ from 1 to $N$. The only cases leading to terms in 
$O\left(\frac{1}{N}\right)$ or even in $O\left(1\right)$ appear when we sum over indices that are all different except two $i$'s or two $j$'s or over 
indices that are all different. Nevertheless, in those cases, at least one term of the form $\E[G(U_{i},V_{j})]$ appears. Since the function $G$ is centered, those cases are then discarded.\\\\
The proof of the result concerning $T_N$ follows the same tracks.
\end{proof}

\begin{proof}[Proof of Corollary \ref{cor:cons_N2_est}]
The proof is based on Lemma \ref{lem:cv}. First, we define $Z_j=\left(Z_j^{v,1},Z_j^{v,2}\right)$,
\begin{align*}
&G(Z_j,W_k)=\1_{\{Z_j^{v,1}\leqp W_k\}} \1_{\{Z_j^{v,2}\leqp W_k\}},\\
&F(Z_j,W_k)=\frac{1}{2}\left(\1_{\{Z_j^{v,1}\leqp W_k\}}+ \1_{\{Z_j^{v,2}\leqp W_k\}}\right),\\
&H(Z_i,Z_j,W_k)=F(Z_i,W_k)F(Z_j,W_k).
\end{align*}

Second, we proceed to the following decomposition
\begin{align*}
&\widehat N_{2,CVM}^{v}=\frac 1N \sum_{k=1}^N \left\{\frac 1N \sum_{j=1}^N  \1_{\{Z_j^{v,1}\leqp W_k\}} \1_{\{Z_j^{v,2}\leqp W_k\}}-\left[\frac {1}{2N} \sum_{j=1}^N  \left(\1_{\{Z_j^{v,1}\leqp W_k\}}+ \1_{\{Z_j^{v,2}\leqp W_k}\}\right)\right]^2\right\}\\
&=\frac{1}{N^2} \sum_{j,k=1}^N \1_{\{Z_j^{v,1}\leqp W_k\}} \1_{\{Z_j^{v,2}\leqp W_k\}}-\frac {1}{4N^3} \sum_{i,j,k=1}^N  \left(\1_{\{Z_i^{v,1}\leqp W_k\}}+ \1_{\{Z_i^{v,2}\leqp W_k\}}\right)\left(\1_{\{Z_j^{v,1}\leqp W_k\}}+ \1_{\{Z_j^{v,2}\leqp W_k\}}\right)\\
&=\frac{1}{N^2} \sum_{j,k=1}^N G(Z_j,W_k)-\frac {1}{N^3} \sum_{i,j,k=1}^N  H(Z_i,Z_j,W_k)\\
&=\frac{1}{N^2} \sum_{j,k=1}^N \left\{G(Z_j,W_k)-\E[G(Z_j,W_k)]\right\}-\frac {1}{N^3} \sum_{i,j,k=1}^N  \left\{H(Z_i,Z_j,W_k)-\E[H(Z_i,Z_j,W_k)]\right\}\\
&\trm{~~~~}+ \frac {1}{N^2} \sum_{j,k=1}^N \E[G(Z_j,W_k)]- \frac {1}{N^3} \sum_{i,j,k=1}^N  \E[H(Z_i,Z_j,W_k)]\\
&=\frac{1}{N^2} \sum_{j,k=1}^N \left\{G(Z_j,W_k)-\E[G(Z_j,W_k)]\right\}-\frac {1}{N^3} \sum_{i,j,k=1}^N  \left\{H(Z_i,Z_j,W_k)-\E[H(Z_i,Z_j,W_k)]\right\}\\
&\trm{~~~~}+ \E[G(Z_1,W_1)]- \left(1-\frac {1}{N}\right) \E[H(Z_1,Z_2,W_1)]-\frac {1}{N} \E[H(Z_1,Z_1,W_1)].\\
\end{align*}
The two first sums converge almost surely to 0 by Lemma \ref{lem:cv}. The remaining term goes to $\E[G(Z_1,W_1)]- \E[H(Z_1,Z_2,W_1)]$ as $N$ goes to infinity.\\\\
It remains to show that $N_{2,CVM}^{v}=\E[G(Z_1,W_1)]- \E[H(Z_1,Z_2,W_1)]$. On the one hand,
\begin{align*}
N_{2,CVM}^{v}&=\int_{\R}\E[(F(t)-F^v(t))^2]dF(t)=\E[H_v^2(W_1)]\\
&=\E[\Cov(\1_{\{Z_1^{v,1}\leqp W_1\}} ,\1_{\{Z_1^{v,2}\leqp W_1\}})]\\
&=\E_W[\E_Z[\1_{\{Z_1^{v,1}\leqp W_1\}} \1_{\{Z_1^{v,2}\leqp W_1\}}]-\E_Z[\1_{\{Z_1^{v,1}\leqp W_1\}}]^2].\\
\end{align*}
On the other hand,
\begin{align*}
&\E[G(Z_1,W_1)]- \E[H(Z_1,Z_2,W_1)]\\
&=\E[\1_{\{Z_1^{v,1}\leqp W_1\}} \1_{\{Z_1^{v,2}\leqp W_1\}}]- \frac{1}{4}\E[(\1_{\{Z_1^{v,1}\leqp W_1\}}+ \1_{\{Z_1^{v,2}\leqp W_1\}})(\1_{\{Z_2^{v,1}\leqp W_1\}}+ \1_{\{Z_2^{v,2}\leqp W_1\}})]\\
&=\E_W[\E_Z[\1_{\{Z_1^{v,1}\leqp W_1\}} \1_{\{Z_1^{v,2}\leqp W_1\}}]]-\E[\1_{\{Z_1^{v,1}\leqp W_1\}}\1_{\{Z_2^{v,2}\leqp W_1\}}]\\
&=\E_W[\E_Z[\1_{\{Z_1^{v,1}\leqp W_1\}} \1_{\{Z_1^{v,2}\leqp W_1\}}]]-\E[\E[\1_{\{Z_1^{v,1}\leqp W_1\}}\1_{\{Z_2^{v,2}\leqp W_1\}}\vert W_1]]\\
&=\E_W[\E_Z[\1_{\{Z_1^{v,1}\leqp W_1\}} \1_{\{Z_1^{v,2}\leqp W_1\}}]]-\E[\E[\1_{\{Z_1^{v,1}\leqp W_1\}}\vert W_1]\E[\1_{\{Z_2^{v,2}\leqp W_1\}}\vert W_1]]\\
&=\E_W[\E_Z[\1_{\{Z_1^{v,1}\leqp W_1\}} \1_{\{Z_1^{v,2}\leqp W_1\}}]]-\E[\E[\1_{\{Z_1^{v,1}\leqp W_1\}}\vert W_1]]\E[\E[\1_{\{Z_2^{v,2}\leqp W_1\}}\vert W_1]]\\
&=\E_W[\E_Z[\1_{\{Z_1^{v,1}\leqp W_1\}} \1_{\{Z_1^{v,2}\leqp W_1\}}]]-\E[\1_{\{Z_1^{v,1}\leqp W_1\}}]\E[\1_{\{Z_2^{v,2}\leqp W_1\}}]\\
&=\E_W[\E_Z[\1_{\{Z_1^{v,1}\leqp W_1\}} \1_{\{Z_1^{v,2}\leqp W_1\}}]]-\E[\1_{\{Z_1^{v,1}\leqp W_1\}}]^2\\
&=\E_W[\E_Z[\1_{\{Z_1^{v,1}\leqp W_1\}} \1_{\{Z_1^{v,2}\leqp W_1\}}]-\E_Z[\1_{\{Z_1^{v,1}\leqp W_1\}}]^2]
\end{align*}
that completes the proof.
\end{proof}

\begin{proof}[Proof of Theorem \ref{th:as_norm_N2_est}]
We define for $t\in \R$,
\begin{align*}
&\mathbb{G}_N^{1,2}(t,t)=\frac 1N \sum_{j=1}^N  \1_{\{Z_j^{v,1}\leqp t\}} \1_{\{Z_j^{v,2}\leqp t\}},\\
&\mathbb{G}_N^i(t)=\frac 1N \sum_{j=1}^N  \1_{\{Z_j^{v,i}\leqp t\}},\; i=1,2,\\
&\mathbb{F}_N(t)=\frac 1N \sum_{k=1}^N  \1_{\{W_k\leqp t\}}
\end{align*}
and we rewrite $\widehat N_{2,CVM}^{v}$ as  a regular function depending on the four empirical processes defined above:
\begin{align*}
\widehat N_{2,CVM}^{v}&= \int \left[\mathbb{G}_N^{1,2}-\left(\frac{\mathbb{G}_N^1+\mathbb{G}_N^2}{2}\right)^2\right]d\mathbb{F}_N.\\
\end{align*}
By Donsker's theorem,
\[
\sqrt{N}\left(\mathbb{G}_N^{1,2}-\widetilde{G},\mathbb{G}_N^1-F,\mathbb{G}_N^2-F,\mathbb{F}_N-F\right)\overset{\mathcal{L}}{\underset{N\to\infty}{\rightarrow}}\mathbb{G} =(\mathbb{G}_1,\mathbb{G}_2,\mathbb{G}_3,\mathbb{G}_4)
\]
where $G(t,s)=\P\left(Z^{v,1}\leqp t,\; Z^{v,2}\leqp s\right)$, $\widetilde{G}(t)=G(t,t)$ and $\mathbb{G}$ is a centered Gaussian process of dimension 4 with covariance function
defined by
\[
\Pi(t,s)=\E\left(A_tA_s^T\right)-\E\left(A_t\right)\E\left(A_s\right)^T,\;  \textrm{for} \quad (t,s) \in \R^2
\]
and $A_t:=\left(\1_{\{Z^{v,1}\leqp t\}} \1_{\{Z^{v,2}\leqp t\}},\1_{\{Z^{v,1}\leqp t\}}, \1_{\{Z^{v,2}\leqp t\}},\1_{\{W\leqp t\}}\right)^T$.\\

Since these processes are càd-làg functions of bounded variation, we introduce the maps $\psi_1,\; \psi_2:BV_1[-\infty,+\infty]^2\mapsto \R$ and 
$\Psi:BV_1[-\infty,+\infty]^4\mapsto \R$ defined by 
\[
\psi_i(F_1,F_2)=\int (F_1)^idF_2,\; i=1,2 \quad  \trm{and} \quad \Psi(F_1,F_2,F_3,F_4)=\psi_1(F_1,F_4)-\psi_2\left(\frac{F_2+F_3}{2},F_4\right),
\]
where $BV_M[a,b] $ is the set of càd-làg functions of variation bounded by $M$. Hence,
\begin{align*}
\widehat N_{2,CVM}^{v}&= \Psi\left(\mathbb{G}_N^{1,2},\mathbb{G}_N^1,\mathbb{G}_N^2,\mathbb{F}_N\right),
\end{align*}

Now using the chain rule 20.9 and Lemma 20.10 in \cite{van2000asymptotic}, the map $\Psi$ is Hadamard-differentiable from the domain $BV_1[-\infty,+\infty]^4$ into $\R$ whose derivative is given by
\[
(h_1,h_2,h_3,h_4)\mapsto D\psi_1{(F_1,F_4)}(h_1,h_4)-D\psi_2{\left(\frac{F_2+F_3}{2},F_4\right)}\left(\frac{h_2+h_3}{2},h_4\right)
\]
where the derivative of $\psi_i$ are given by Lemma 20.10:
\[
(h_1,h_2)\mapsto h_2\varphi_i \circ F_1\vert_{-\infty}^{+\infty}-\int h_{2-}d\varphi_i \circ F_1+\int \varphi'_i(F_1)h_1dF_2
\]
with $\varphi_i(x)=x^i$ and $h_-$ is the left-continuous version of a càd-làg function $h$.\\

Applying the functional delta method 20.8 in \cite{van2000asymptotic} we get the weak convergence  of $\sqrt{N}\left(\widehat N_{2,CVM}^{v}- N_{2,CVM}^{v}\right)$ to the following limit distribution
\[
\int \mathbb{G}_{4-}d(F^2-\widetilde{G})+\int \mathbb{G}_1 dF-\int F(\mathbb{G}_2+\mathbb{G}_3)dF.
\]
Since the map $\Psi$ is continuous on the whole space $BV_1[-\infty,+\infty]^4$, the delta method in its stronger form 20.8 in \cite{van2000asymptotic} implies that the limit variable is the limit in distribution of the sequence
\begin{align*}
&D\Psi{(\widetilde{G},F,F,F)}\left(\sqrt{N}\left(\mathbb{G}_N^{1,2}-\widetilde{G},\mathbb{G}_N^1-F,\mathbb{G}_N^2-F,\mathbb{F}_N-F\right)\right)\\
&=\sqrt{N} \left[\int \left(\mathbb{F}_N-F\right)_-d\left(F^2-\widetilde G)\right)+\int \left(\mathbb{G}_N^{1,2}-\widetilde{G}-F\left(\mathbb{G}_N^{1}+\mathbb{G}_N^{2}-2F\right)\right)dF\right].
\end{align*}
We define
\begin{align*}
U&:=\int \1_{\{W< t\}} d(F^2(t)-\widetilde G(t)= \widetilde  G(W)-F(W)^2,\\
V&:=\int \left[\1_{\{Z^{v,1}\leqp t\}}\1_{\{Z^{v,2}\leqp t\}}-\left(\1_{\{Z^{v,1}\leqp t\}}+\1_{\{Z^{v,2}\leqp t\}}\right)F(t)\right] dF(t)\\
&=\frac 12 \left(F(Z^{v,1})^2+F(Z^{v,2})^2\right)-F(Z^{v,1}\vee Z^{v,2}).
\end{align*}
%As usual, the subscript $+$ means that we are taking the limit coming from the right. 
%Obviously,
%\begin{align*}
%&\E(U)=\int \left(\widetilde  G(t)-F(t)^2\right)dF(t),\\
%&\E(U^2)=\int \left(\widetilde  G(t)-F(t)^2\right)^2dF(t),\\
%&\E(V)=\int \left(F(t)^2-\widetilde  G(t)\right)dF(t),\\
%&\E(V^2)=\frac 12 \int F(t)^4dF(t)+\iint \left[F(t\vee s)\left(F(t\vee s)-F(t)^2-F(s)^2\right)+\frac 12 F(t)^2F(s)^2\right]dG(t,s).
%\end{align*}
By independence, the limiting variance $\xi^2$ is 
\begin{align}\label{eq:var_xi}
\xi^2=\Var U+\Var V.
\end{align}
\end{proof}

\subsection{Proof of Proposition \ref{prop:ex}}

\begin{proof}[Proof of Proposition \ref{prop:ex}]
First of all, the distribution function of $Y$ conditioned on $X_1$ is given by
\begin{align*}
F^{(1)}(t)&=\P(Y\leqp t|X_1)=\Phi\left(\frac{\ln t-X_1}{2}\right).
\end{align*}
Then
\begin{align*}
N_{2,CVM}^{1}&=\int_{\R} \E\left[(F^{(1)}(t)-F_Y(t))^2\right]f_Y(t)dt\\
&=\int_{\R^+} \E\left[\left(\Phi\left(\frac{\ln t-X_1}{2}\right)-\Phi\left(\frac{\ln y}{\sqrt 5}\right)\right)^2\right]\frac{1}{\sqrt{10\pi}t}e^{-(\ln t)^2/10}dt\\
&=\int_{\R} \E\left[\left(\Phi\left(\frac{\sqrt 5 z-X_1}{2}\right)-\Phi\left(z\right)\right)^2\right]e^{-z^2/10}\frac{dz}{\sqrt{2\pi}}\\
&=\E\left[\left(\Phi(X_2)-\Phi\left(\frac{\sqrt 5 X_2-X_1}{2}\right)\right)^2\right]
\end{align*}
where $X_1$ and $X_2$ are independent standard Gaussian random variables.
In the same way,
\begin{align*}
N_{2,CVM}^{2}&=\E\left[(\Phi(X_2)-\Phi\left(\sqrt 5 X_2-2X_1\right))^2\right].
\end{align*}
Thus we are lead to compute the bivariate function:
\[
\varphi(\alpha,\beta):=\E\left[(\Phi(X_2)-\Phi\left(\alpha X_2-\beta X_1\right))^2\right]
\]
for  $(\alpha,\beta)=(\sqrt 5/2,1/2)$ and $(\alpha,\beta)=(\sqrt 5,2)$.
The term $\E\left[\Phi(X_2)^2\right]$ is
\begin{align*}
\E\left[\Phi(X_2)^2\right]&= \int \Phi(z)^2 f(z)dz=\left[\frac 13 \Phi(z)^3\right]_{-\infty}^{+\infty}=\frac 13.
\end{align*}
We introduce three independent random variables $U$, $U'$ and $V$ distributed as standard Gaussian random variables. Then the term $\E\left[\Phi\left(\alpha X_2-\beta X_1\right)^2\right]$ can be rewritten as
\begin{align*}
\E\left[\Phi\left(\alpha X_2-\beta X_1\right)^2\right]&=\E\left[\Phi\left(\sqrt{\alpha^2+\beta^2} V\right)^2\right]=\E\left[\E\left[\1_{U\leqp \sqrt{\alpha^2+\beta^2} V}\vert V\right]^2\right]\\
&=\E\left[\E\left[\1_{U\leqp \sqrt{\alpha^2+\beta^2} V}\vert V\right]\E\left[\1_{U'\leqp \sqrt{\alpha^2+\beta^2} V}\vert V\right]\right]=\E\left[\E\left[\1_{U\leqp \sqrt{\alpha^2+\beta^2} V}\1_{U'\leqp \sqrt{\alpha^2+\beta^2} V}\vert V\right]\right]\\
&=\E\left[\1_{U\leqp \sqrt{\alpha^2+\beta^2} V}\1_{U'\leqp \sqrt{\alpha^2+\beta^2} V}\right]=\P\left(U\leqp \sqrt{\alpha^2+\beta^2} V,\, U'\leqp \sqrt{\alpha^2+\beta^2} V\right)\\
&=: G(\sqrt{\alpha^2+\beta^2}).
\end{align*}
Integrating by parts, we have
\begin{align*}
G'(a)
%&=2 \int_{\R} z\Phi(az)f(az)e^{-z^2/2}\frac{dz}{\sqrt{2\pi}}\\
&=2 \int_{\R} z\Phi(az)e^{-(a^2+1)z^2/2}\frac{dz}{2\pi}\\
&=-\frac{1}{\pi(a^2+1)} \left(\left[\Phi(az)e^{-(a^2+1)z^2/2}\right]_{-\infty}^{+\infty}-a\int_{\R} f(az)e^{-(a^2+1)z^2/2}dz\right)\\
&=\frac{a}{\pi(a^2+1)}\frac{1}{\sqrt{2a^2+1}}.
\end{align*}
Since $G(1)=1/3$, we get
\begin{align*}
G(a)&=\frac 13 + \int_{1}^a  \frac{x}{\pi(x^2+1)}\frac{1}{\sqrt{2x^2+1}}dx=\frac 13 + \frac{1}{\pi}(\arctan \sqrt{1+2a^2}-\arctan \sqrt 3)=\frac{1}{\pi}\arctan \sqrt{1+2a^2}\\
\end{align*}
and
\begin{align*}
\E\left[\Phi\left(\alpha X_2-\beta X_1\right)^2\right]&=\frac 13 + \frac{1}{\pi}(\arctan \sqrt{1+2(\alpha^2+\beta^2)}-\arctan \sqrt 3)=\frac{1}{\pi}\arctan \sqrt{1+2(\alpha^2+\beta^2)}.
\end{align*}
In the same way, the last term $\E\left[\Phi(X_2)\Phi\left(\alpha X_2-\beta X_1\right)\right]$ is given by
\begin{align*}
\E\left[\Phi(X_2)\Phi\left(\alpha X_2-\beta X_1\right)\right]
&=\P\left(U\leqp V,\, \sqrt{\frac{1+\beta^2}{\alpha^2}}U'\leqp V\right)
\end{align*}
where $U$, $U'$ and $V$ are independent standard Gaussian random variables. Remind  that we only need to consider $(\alpha,\beta)=(\sqrt 5/2,1/2)$ and $(\alpha,\beta)=(\sqrt 5,2)$ in which cases $\sqrt{\frac{1+\beta^2}{\alpha^2}}=1$. Thus the last term equals $1/3$ in both cases. It remains to divide by the normalizing factor 1/6 to get the result.
\end{proof}

\begin{rmk}
In the previous proof, we establish that
\begin{align*}
G(a)&=\P\left(U\leqp a V,\, U'\leqp a V\right)
\end{align*}
is equal to $\frac{1}{\pi}\arctan \sqrt{1+2a^2}$ where $U$, $U'$ and $V$ are independent standard Gaussian random variables. Actually, this result is also a straightforward consequence of  Lemma 4.3 in \cite{AW09} with $X=(aV-U)/\sqrt{a^2+1}$ and $Y=(aV-U')/\sqrt{a^2+1}$. Nevertheless, since our proof is different and elegant, we decide not to skip it.
\end{rmk}

\bibliographystyle{plain}
\bibliography{biblio_MultiPick}

\def\cprime{$'$}
\begin{thebibliography}{10}

\bibitem{anton84}
A.~Antoniadis.
\newblock Analysis of variance on function spaces.
\newblock {\em Statistics: A Journal of Theoretical and Applied Statistics},
  15(1):59--71, 1984.

\bibitem{AW09}
J.M. Aza{\"{\i}}s and M.~Wschebor.
\newblock {\em Level sets and extrema of random processes and fields}.
\newblock John Wiley \& Sons, Inc., Hoboken, NJ, 2009.

\bibitem{borgonovo2007}
E.~Borgonovo.
\newblock A new uncertainty importance measure.
\newblock {\em Reliability Engineering \& System Safety}, 92(6):771--784, 2007.

\bibitem{BB13}
E.~Borgonovo and M.~Baucells.
\newblock Invariant probabilistic sensitivity analysis.
\newblock {\em Management Science}, 59(11):2536--2549, 2013.

\bibitem{grobobo}
E.~Borgonovo, W.~Castaings, and S.~Tarantola.
\newblock Moment independent importance measures: New results and analytical
  test cases.
\newblock {\em Risk Analysis}, 31(3):404--428, 2011.

\bibitem{petibobo}
E.~Borgonovo, W.~Castaings, and S.~Tarantola.
\newblock Model emulation and moment-independent sensitivity analysis: An
  application to environmental modelling.
\newblock {\em Environmental Modelling \& Software}, 34:105--115, 2012.

\bibitem{BHP14}
E.~Borgonovo, G.~Hazen, and E.~Plischke.
\newblock Probabilistic sensitivity measures: Foundations and estimation.
\newblock {\em Submitted}, pages 1--24, 2014.

\bibitem{BI15}
E.~Borgonovo and B.~Iooss.
\newblock {\em Moment Independent Importance Measures and a Common Rationale}.
\newblock Preprint, 2015.

\bibitem{BI16}
E.~Borgonovo and B.~Iooss.
\newblock {\em Moment-Independent and Reliability-Based Importance Measures},
  pages 1--23.
\newblock Springer International Publishing, Cham, 2016.

\bibitem{BD92}
R.~Buchbinder and A.~S. Detsky.
\newblock Management of suspected giant cell arteritis: A decision analysis.
\newblock {\em J. Rheumatology}, 19(9):1220--1228, 1992.

\bibitem{DaVeiga13}
S.~Da~Veiga.
\newblock Global sensitivity analysis with dependence measures.
\newblock {\em J. Stat. Comput. Simul.}, 85(7):1283--1305, 2015.

\bibitem{radouche}
Y.~{De Castro} and A.~{Janon}.
\newblock {Randomized pick-freeze for sparse Sobol indices estimation in high
  dimension}.
\newblock {\em ArXiv e-prints}, March 2014.

\bibitem{rocquigny2008uncertainty}
E.~De~Rocquigny, N.~Devictor, and S.~Tarantola.
\newblock {\em Uncertainty in industrial practice}.
\newblock Wiley Online Library, 2008.

\bibitem{FH98}
J.C. Felli and G.~Hazen.
\newblock Sensitivity analysis and the expected value of perfect information.
\newblock {\em Med. Decis. Making}, 18(1):95--109, 1998.

\bibitem{FH04}
J.C. Felli and G.~Hazen.
\newblock Javelin diagrams: A graphical tool for probabilistic sensitivity
  analysis.
\newblock {\em Decision Analysis}, 1(2):93--107, 2004.

\bibitem{FKR13}
J.-C. {Fort}, T.~{Klein}, and N.~{Rachdi}.
\newblock New sensitivity analysis subordinated to a contrast.
\newblock {\em Communications in Statistics - Theory and Methods}, 2015 to
  appear.

\bibitem{GJKL14}
F.~Gamboa, A.~Janon, T.~Klein, and A.~Lagnoux.
\newblock Sensitivity analysis for multidimensional and functional outputs.
\newblock {\em Electronic Journal of Statistics}, 8:575--603, 2014.

\bibitem{pickfreeze}
F.~{Gamboa}, A.~{Janon}, T.~{Klein}, A.~{Lagnoux-Renaudie}, and C.~{Prieur}.
\newblock {Statistical inference for Sobol pick freeze Monte Carlo method},
  March 2013.

\bibitem{janon2012asymptotic}
A.~Janon, T.~Klein, A.~Lagnoux, M.~Nodet, and C.~Prieur.
\newblock Asymptotic normality and efficiency of two sobol index estimators.
\newblock {\em ESAIM: Probability and Statistics}, 18:342--364, 1 2014.

\bibitem{Muller97}
A.~M{\"u}ller.
\newblock Integral probability metrics and their generating classes of
  functions.
\newblock {\em Adv. in Appl. Probab.}, 29(2):429--443, 1997.

\bibitem{NM53}
J.~von Neumann and O.~Morgenstern.
\newblock {\em Theory of Games and Economic Behavior}.
\newblock Princeton, NJ. Princeton University Press, 1953.

\bibitem{oakley09}
J.~E. Oakley.
\newblock Decision-theoretic sensitivity analysis for complex computer models.
\newblock {\em Technometrics}, 51(2):121--129, 2009.

\bibitem{Owen12}
A.B. Owen.
\newblock Variance components and generalized sobol' indices.
\newblock {\em SIAM/ASA Journal on Uncertainty Quantification}, 1(1):19--41,
  2013.

\bibitem{ODC13}
A.B. Owen, J.~Dick, and S.~Chen.
\newblock Higher order sobol' indices.
\newblock {\em Information and Inference}, 3(1):59--81, 2014.

\bibitem{saltelli-sensitivity}
A.~Saltelli, K.~Chan, and E.M. Scott.
\newblock {\em Sensitivity analysis}.
\newblock Wiley Series in Probability and Statistics. John Wiley \& Sons, Ltd.,
  Chichester, 2000.

\bibitem{sant:will:notz:2003}
T.~J. Santner, B.~Williams, and W.~Notz.
\newblock {\em The Design and Analysis of Computer Experiments}.
\newblock Springer-Verlag, 2003.

\bibitem{sobol1993}
I.~M. Sobol.
\newblock Sensitivity estimates for nonlinear mathematical models.
\newblock {\em Math. Modeling Comput. Experiment}, 1(4):407--414 (1995), 1993.

\bibitem{horst86}
H.~J. ter Horst.
\newblock On {S}tieltjes integration in {E}uclidean space.
\newblock {\em J. Math. Anal. Appl.}, 114(1):57--74, 1986.

\bibitem{van2000asymptotic}
A.~W. van~der Vaart.
\newblock {\em Asymptotic statistics}, volume~3 of {\em Cambridge Series in
  Statistical and Probabilistic Mathematics}.
\newblock Cambridge University Press, Cambridge, 1998.

\end{thebibliography}

\end{document}